\newtheorem{theorem}{Theorem}
\newtheorem{proposition}[theorem]{Proposition}
\theoremstyle{definition}
\newtheorem{lemma}[theorem]{Lemma}
\theoremstyle{remark}
\newtheorem{remark}[theorem]{Remark}
\newtheorem{assumption}[theorem]{Assumption}
\numberwithin{theorem}{section}
\numberwithin{figure}{section}
\numberwithin{table}{section}
\numberwithin{equation}{section}
\newenvironment{abstr}[1]{ \vspace{.05in}\footnotesize
	\parindent .2in
	{\upshape\bfseries #1. }\ignorespaces}{\par\vspace{.1in}}
\newenvironment{Abstract}{\begin{abstr}{Abstract}}{\end{abstr}}
\newenvironment{keywords}{\begin{abstr}{Key words}}{\end{abstr}}
\newenvironment{AMS}{\begin{abstr}{AMS subject classifications}}{\end{abstr}}
\newcommand{\Cdata}{C_{\textnormal{data}}}
\newcommand{\uit}[1]{u^{(#1)}}
\newcommand{\uhit}[1]{u_{h,p}^{(#1)}}
\begin{document}
	\title{Higher-order finite element methods for the nonlinear Helmholtz equation}
	\author{Barbara Verf\"urth\thanks{Institut f\"ur Numerische Simulation, Universit\"at Bonn, Friedrich-Hirzebruch-Allee 7, D-53115 Bonn, Germany}}
	\date{}
	
	\maketitle

	\begin{Abstract}
		In this work, we analyze the finite element method with arbitrary but fixed polynomial degree for the nonlinear Helmholtz equation with impedance boundary conditions.
		We show well-posedness and error estimates of the finite element solution under a resolution condition between the wave number $k$, the mesh size $h$ and the polynomial degree $p$ of the form ``$k(kh)^{p+1}$ sufficiently small'' and a so-called smallness of the data assumption. For the latter, we prove that the logarithmic dependence in $h$ from the case $p=1$ in [H.~Wu, J.~Zou, \emph{SIAM J.~Numer.~Anal.} 56(3): 1338-1359, 2018] can be removed for $p\geq 2$.
		We show convergence of two different fixed-point iteration schemes. Numerical experiments illustrate our theoretical results and compare the robustness of the iteration schemes with respect to the size of the nonlinearity and the right-hand side data.
	\end{Abstract}
	\begin{keywords}
		nonlinear Helmholtz equation, higher-order finite elements, error analysis, high wave number
	\end{keywords}
	\begin{AMS}
		65N15, 65N12, 65N30, 78A40	
	\end{AMS}
	
	\section{Introduction}
In various situations, such as for high intensities, linear(ized) material laws are no longer accurate enough and nonlinear constitutive relations have to be incorporated into the models.
One well-known example are Kerr-type materials \cite{Ker75} in electromagnetics, where the permittivity $\varepsilon$ depends on the electric field $E$ like $\varepsilon(E)=\varepsilon_0+\varepsilon_2|E|^2$.
In general, wave propagation in nonlinear media causes a lot of new possible phenomena such as optical bistability \cite{GolG84}.

As simplified model, we study in the following the nonlinear Helmholtz problem
\begin{align*}
-\Delta u -k^2(1+\varepsilon \chi_D|u|^2) u &= f \qquad \text{in}\quad \Omega,\\
\partial_\nu u+iku&=g\qquad \text{on}\quad \partial \Omega,
\end{align*}
where $D\subset\subset \Omega$ is the subdomain where the nonlinearity is ``active''.
Detailed assumptions on the domain and the data are given below.
Problems of the above form occur in nonlinear acoustics as well as in time-harmonic and suitably polarized nonlinear electromagnetics.

The nonlinear Helmholtz equation has been studied analytically for instance in \cite{EveW14}. Various numerical approaches have been suggested as well: \cite{BarFT09} and \cite{XuB10} consider layered media and study a finite volume approach or approximate it as the steady state to a Schrödinger equation, respectively. \cite{YuaL17} focuses on different iteration schemes for the nonlinearity and uses a pseudospectral method in space. 
A multiscale finite element method is proposed and analyzed for the heterogeneous nonlinear Helmholtz equation in \cite{MaiV22}.
The present work is inspired by \cite{WuZ18}, where the \emph{linear (i.e., $p=1$)} finite element method (FEM) is studied and a priori error estimates are shown. The main findings of \cite{WuZ18} are that, under a smallness of the data assumption between $k$, $\varepsilon$, $f$ and $g$ as well as the resolution condition $k^3h^2$ sufficiently small, a unique finite element solution exists and the discretization error is of the order $kh+k^3h^2$.
We emphasize that both the resolution condition as well as the a priori error estimate are similar to the well-studied linear case in the so-called pre-asymptotic regime, cf.~\cite{DuWu2015}.
In a similar spirit as \cite{WuZ18}, the recent work \cite{JiLiWuZo2022} provides a finite element error analysis of the nonlinear Helmholtz equation with perfectly matched layer at the boundary and Newton's method as iteration scheme.

In fact, a major ingredient in the numerical analysis of \cite{WuZ18,JiLiWuZo2022} is the study of an auxiliary linearized Helmholtz problem which is solved in each iteration step.
The finite element error analysis of the linear Helmholtz equation is much more mature than of its nonlinear counterpart.
Seminal results are the asymptotic $hp$-FEM analysis of \cite{MelS10,MelS11} and the pre-asymptotic error analysis for arbitrary, but fixed polynomial degree of \cite{DuWu2015}. These results have been obtained for the constant coefficient case, but recently much progress has been made for the heterogeneous Helmholtz equation as well.  In the asymptotic regime, arbitrary but fixed polynomial degree is treated in \cite{ChNi2020} and the $hp$-FEM in \cite{GraS20,LaSpWu2021a,BeChMe2021}. Pre-asymptotic estimates for the absolute error can be found in \cite{GaSp23,Pem2020}, whereas \cite{LaSpWu2022} studies the relative error.
By the difference between ``arbitrary but fixed polynomial degree'' and ``$hp$-FEM'' results, we mean that in the first case, constants may (implicitly) depend on the polynomial degree.
By now higher-order- and $hp$-FEM approximations are the state of the art -- in comparison to linear FEM -- for the Helmholtz equation as they allow a relaxed resolution condition of $k(kh)^{2p}$ sufficiently small (pre-asymptotic, fixed polynomial degree) or $kh/p$ sufficiently small and $p\gtrsim \ln k$ (asymptotic, $hp$ version).

Our main contribution is the rigorous a priori error analysis of higher-order finite element methods for the \emph{nonlinear} Helmholtz problem.
We essentially show that under a smallness of the data assumption similar to \cite{WuZ18,JiLiWuZo2022}, a resolution condition of $k(kh)^{p+1}$ is sufficient for existence and uniqueness of a finite element solution and the discretization error is of the order $h+(kh)^{p}+k(kh)^{2p}$.
We also rely on the numerical analysis of a linearized Helmholtz equation, where we prove a solution splitting (into an analytic and a less oscillatory part) in the spirit of \cite{MelS10,MelS11} as well as pre-asymptotic stability and error estimates. 
The linearized Helmholtz equation has a non-constant, discontinuous refractive index, so that we cannot directly apply recent results for the heterogeneous Helmholtz equation \cite{LaSpWu2021b,BeChMe2021,Pem2020}. If we assume smoothness of the domain where the nonlinearity is active, one might transfer the recent results from \cite{GaSp23}. However, here we use a  perturbation argument that the deviation from the constant-coefficient case is sufficiently small due to the smallness of the data assumption. Moreover, we show a discrete stability result in the $L^\infty$-norm which causes the tighter resolution condition $k(kh)^{p+1}\lesssim 1$ compared to the linear case.
Along this analysis, we treat two different iteration schemes: the frozen nonlinearity scheme considered in \cite{WuZ18} and a scheme suggested in \cite{YuaL17}. For the latter, we provide the first proof of linear convergence by interpreting it as a fixed-point iteration. This result fills a theoretical gap in \cite{YuaL17} and may be of own interest.

The paper is organized as follows. We present the setting and study the iteration schemes in Section \ref{sec:setting}. Section \ref{sec:nonlineardiscrete} describes the finite element discretization and the main error estimates, whose proofs are then presented in Section \ref{sec:proofs}. Finally, we illustrate our theoretical results with numerical experiments in Section \ref{sec:numexp}, where we also compare the two iteration schemes numerically in detail.

\section{Nonlinear Helmholtz equation in the continuous setting}\label{sec:setting}
In this section, we formulate our model problem and discuss the solution of the nonlinear problem via iteration schemes in the continuous setting.

Throughout this article, all our functions are complex-valued unless otherwise mentioned.
For any (sub)domain $S$, $(\cdot, \cdot)_S$ denotes the complex $L^2$-scalar product (with complex conjugate in the second argument).
We use standard notation on Sobolev spaces $H^s(S)$ and their norms.
Further, we use the following (semi)norms $\|\cdot\|_{0,S} := \|\cdot\|_{L^2(S)}$, $|\cdot|_{1,S} := \|\nabla \cdot\|_{0,S}$, and $\|\cdot \|_{2,S}:=\|\cdot \|_{H^2(S)}$.
As usual in the Helmholtz context, we also employ the following $k$-weighted norm $\|\cdot\|^2_{1,k,S} := |\cdot|_{1,S}^2 + k^2\,\|\cdot\|_{0,S}^2$.
We will omit the subdomain $S$ in the notation of norms and scalar products if it equals the full computational domain $\Omega$ and no confusion can arise.
Last, we use the notation $a \lesssim b$ to indicate that there exists a generic constant $C$, independent of $h$ and $k$ but possibly dependent on the polynomial degree $p$, such that $a \leq C b$.
\subsection{Model problem}\label{subsec:problem}
Let $\Omega \subset \mathbb{R}^d$, $d \in \{2,3\}$, be a bounded star-shaped domain with analytic boundary $\Gamma = \partial \Omega$ and outer normal $\nu$. 
In this work, we are interested in approximating the (weak) solution $u\in H^1(\Omega)$ of the following nonlinear Helmholtz problem
\begin{equation}\label{eq:NLHweak}
\mathcal B(u,v) := (\nabla u, \nabla v) -  (k^2(1+\chi_D\varepsilon|u|^2) u,v) + i (k u,v)_\Gamma = (f,v) + (g, v)_\Gamma
\end{equation}
for all $v \in H^1(D)$.
Here, $k$ is the wave number, $\varepsilon$ the Kerr coefficient, and $\chi_D$ denotes the characteristic function of $D$.  
We make the following assumptions on the data throughout the whole article.
\begin{assumption}
	\label{a:coeff}
	We assume that $f\in L^2(\Omega)$, $g\in H^{1/2}(\Gamma)$ and $\varepsilon \in \mathbb{R}_{>0}$. Suppose that $k\gtrsim 1$ in the sense there exists a constant $k_0>0$ such that $k\geq k_0$ and, subsequently, all constants in our estimates may depend on $k_0$. Finally, we assume that $D\subset\subset \Omega$ is a non-empty compactly embedded subdomain with Lipschitz boundary.
\end{assumption}
In the following, we abbreviate $\Cdata\coloneqq \|f\|_0+\|g\|_{H^{1/2}(\Gamma)}$.
\cite{WuZ18} shows that there exists $\theta_0$ such that if \begin{equation}\label{eq:smallnessdatacont}
k^{d-2}\varepsilon\Cdata^2<\theta_0,
\end{equation}
there exists a unique solution $u\in H^1(\Omega)$ to \eqref{eq:NLHweak}. Further, $u$ satisfies the following a priori estimates
\begin{equation}\label{eq:apriorinonlin}
\|u\|_{1,k}\lesssim \Cdata,\qquad \|u\|_2\lesssim k\Cdata,\qquad \|u\|_{L^\infty(D)}\lesssim k^{(d-3)/2}\Cdata.
\end{equation}
The (sufficient) condition $k^{d-2}\varepsilon\Cdata^2<\theta_0$ for these results to hold is called a smallness of the data assumption and it comes from a Banach fixed-point argument, see also Section \ref{subsec:iter} below.
While the exact condition itself does not seem to be sharp in numerical experiments, it is well known that some condition on $k, \varepsilon,f$ and $g$ is required for uniqueness. 
Note that the power of $k$ in the smallness assumption depends on the stability constant for the linear Helmholtz equation with $\varepsilon =0$. Throughout this article, we assume this stability constant to be $O(1)$, which is well established for star-shaped domains $\Omega$, see, e.g., \cite{Mel95,CuFe2006}.

\begin{remark}
	For some results, the assumption $g\in L^2(\Gamma)$ instead of $g\in H^{1/2}(\Gamma)$ would be sufficient. For simplicity, we omit to track this and work under Assumption \ref{a:coeff} and with the constant $\Cdata$.
\end{remark}

\subsection{Iteration schemes}\label{subsec:iter}
We present and discuss two iteration schemes for the nonlinear problem in the continuous setting. Both schemes will subsequently be combined with the spatial discretization in Section \ref{subsec:fem} to obtain a discrete solution in practice.

\cite{WuZ18} considers the following fixed-point iteration based on a \emph{frozen nonlinearity} approach. Given the previous iterate $\uit{l-1}$, the next iterate $\uit{l}\in H^1(\Omega)$ is defined as
\begin{equation}\label{eq:froznonlinitcont}
\begin{aligned}
\mathcal{B}_{\mathrm{lin}}(\uit{l-1}; \uit{l}, v)&=(f,v)+(g, v)_\Gamma \qquad \text{for all}\quad v\in H^1(\Omega),\\
\text{where}\quad \mathcal{B}_{\mathrm{lin}}(\Phi; v,w)&\coloneqq (\nabla v, \nabla w) -  (k^2(1+\chi_D\varepsilon|\Phi|^2) v,w) + i (k v,w)_\Gamma.
\end{aligned}
\end{equation}
The iteration starts from some $\uit{0}\in H^1(\Omega)\cap L^\infty(D)$ with sufficiently small energy norm. For simplicity, we consider $\uit{0}\equiv 0$ throughout.
Under the smallness of data assumption \eqref{eq:smallnessdatacont} with suitably chosen $\theta_0$, the sequence $\{\uit{l}\}_{l\in \mathbb N}$ forms a strict contraction in the sense that
\begin{equation}\label{eq:contractionfrozennonlin}
\|\uit{l+1}-\uit{l}\|_{1,k}\leq \frac 12 \|\uit{l}-\uit{l-1}\|_{1,k}\qquad \forall l\geq 1.
\end{equation}
This is the main idea in the proof of existence and uniqueness of a solution $u$ to \eqref{eq:NLHweak} in \cite{WuZ18}. Precisely, the sequence $\{\uit{l}\}$ converges to $u$ strongly in $H^1(\Omega)$. 

\cite{YuaL17} proposes a different iteration scheme -- the motivation stems from Newton's method, but it is neither Newton's method itself nor any variant thereof.
Given the previous iterate $\uit{l-1}$, the next iterate $\uit{l}\in H^1(\Omega)$ is defined as
\begin{equation}\label{eq:simplenewtonitcont}
\begin{aligned}
\mathcal{A}(\uit{l-1}; \uit{l}, v)&=-k^2\varepsilon(|\uit{l-1}|^2\uit{l-1}, v)_D+(f,v)+(g, v)_\Gamma \quad \text{for all }v\in H^1(\Omega),\\
\text{where}\quad \mathcal{A}(\Phi; v,w)&\coloneqq (\nabla v, \nabla w) -  (k^2(1+2\chi_D\varepsilon|\Phi|^2) v,w) + i (k v,w)_\Gamma.
\end{aligned}
\end{equation}
Again, we let the iteration start from $\uit{0}\equiv 0$ for simplicity.
\cite{YuaL17} observes numerically that the scheme converges with a linear rate and that it has the advantage of allowing larger values of $k$, $\varepsilon$ and the data than \eqref{eq:froznonlinitcont}.
To the best of our knowledge, these observations have not been rigorously confirmed in theory.
In the rest of this section, we will hence address the convergence of \eqref{eq:simplenewtonitcont} and relate it to the convergence of the frozen nonlinearity approach.

\subsection{Linearized Helmholtz equation as auxiliary problem}\label{subsec:linearized}
Both iteration schemes \eqref{eq:froznonlinitcont} and \eqref{eq:simplenewtonitcont} solve a linear(ized) Helmholtz
problem in each step, which we can formulate as follows.
Let $\Phi\in H^1(\Omega)\cap L^\infty(D)$ be given. 
For \eqref{eq:froznonlinitcont}, find $w\in H^1(\Omega)$ such that
\begin{equation}\label{eq:auxpb}
\mathcal{B}_\mathrm{lin}(\Phi; w, v):=(\nabla w, \nabla v)-k^2((1+\chi_D\varepsilon|\Phi|^2 )w, v)+ik(w,v)_\Gamma =(f,v)+(g,v)_\Gamma
\end{equation}
for all $v\in H^1(\Omega)$.
For \eqref{eq:froznonlinitcont}, find $\tilde w\in H^1(\Omega)$ such that
\begin{equation}\label{eq:auxpbyua}
\mathcal{A}(\Phi; \tilde w, v):=(\nabla \tilde w, \nabla v)-k^2((1+2\chi_D\varepsilon|\Phi|^2 )\tilde w, v)+ik(\tilde w,v)_\Gamma =(\tilde f,v)+(g,v)_\Gamma
\end{equation}
for all $v\in H^1(\Omega)$, where $\tilde{f}=f-k^2\varepsilon\chi_D |\Phi|^2\Phi$. Note that \eqref{eq:auxpb} and \eqref{eq:auxpbyua} are closely related because $\mathcal{A}(\Phi; w, v)=\mathcal{B}_\mathrm{lin}(\sqrt{2}\Phi; w, v)$ and the right-hand side is (slightly) different.
Consequently, we can deduce many results for \eqref{eq:auxpbyua} from their counterparts for \eqref{eq:auxpb}.
We emphasize that \eqref{eq:auxpb} is of Helmholtz-type, but with a variable, i.e., $x$-dependent, refractive index $n\coloneqq 1+\chi_D \varepsilon|\Phi|^2$ induced by $\Phi$.
Moreover, our assumptions on $\varepsilon$ and $D$ imply that $n$ is discontinuous over $\partial D$, i.e., the interface between the nonlinear material and the linear ``background''.

\cite{WuZ18} shows that there exists a constant $\theta_1$ such that if $k\varepsilon\|\Phi\|_{L^\infty(D)}^2\leq \theta_1$, a unique solution $w\in H^1(\Omega)$ to \eqref{eq:auxpb} exists and it satisfies the a priori estimates
\begin{equation}\label{eq:stabauxpb}
\|w\|_{1,k}\leq C_1 \Cdata,\qquad \|w\|_2\leq C_2 k\Cdata,\qquad \|w\|_{L^\infty(D)}\leq C_\infty  k^{(d-3)/2}\Cdata.
\end{equation}
In fact, these results on $w$ are the crucial ingredient to establish existence, uniqueness and a priori estimates (cf.~\eqref{eq:apriorinonlin}) for the solution $u$ to the nonlinear problem \eqref{eq:NLHweak} in \cite{WuZ18}.
By exploiting the correspondence between \eqref{eq:auxpb} and \eqref{eq:auxpbyua}, we directly obtain that, if $k\varepsilon\|\Phi\|_{L^\infty(D)}^2\leq \theta_1/2$, a unique solution $\tilde w\in H^1(\Omega)$ to \eqref{eq:auxpbyua} exists.
From \eqref{eq:stabauxpb} and \eqref{eq:auxpbyua}, we deduce the a priori estimates
\begin{equation}\label{eq:stabauxpbyua1}
\|\tilde w\|_{1,k}\leq C_1(\Cdata+k^2\varepsilon\|\Phi\|^2_{L^\infty(D)}\|\Phi\|_0)\leq C_1\Bigl(\Cdata+\frac{\theta_1}{2}\|\Phi\|_{1,k}\Bigr)
\end{equation}
as well as
\begin{equation}\label{eq:stabauxpbyua2}
\|\tilde w\|_2\leq C_2 k\Bigl(\Cdata+\frac{\theta_1}{2}\|\Phi\|_{1,k}\Bigr),\qquad \|\tilde w\|_{L^\infty(D)}\leq C_\infty k^{(d-3)/2}\Bigl(\Cdata+\frac{\theta_1}{2}\|\Phi\|_{1,k}\Bigr).
\end{equation}
These estimates will play a central role in the convergence proof for \eqref{eq:simplenewtonitcont} in Section \ref{subsec:conviter} below.

In the analysis of the finite element method for the linearized Helmholtz problem in Section \ref{subsec:analysislinearized}, we need a splitting of the continuous solution $w$ into an $H^2$-regular and an analytic part.
The recent results on solution splittings obtained by \cite{LaSpWu2021a,LaSpWu2021b} are not applicable since $n$ may exhibit a discontinuity across $\partial D$ in our case, see above.
Under a smallness of the data assumption, we show that the well-known solution splitting of the standard Helmholtz equation with $\varepsilon=0$ already yields the desired result for \eqref{eq:auxpb} as well. We note once more that the argument then transfers to \eqref{eq:auxpbyua} by the obvious modifications in the scaling of $\Phi$ and the form of $f$.

\begin{proposition}\label{prop:splitting}
	There is a constant $\theta_2$ such that, if $k\varepsilon\|\Phi\|_{L^\infty(D)}^2\leq \theta_2$, 
	the solution $w$ of \eqref{eq:auxpb} can be split as $w=w_{H^2}+w_{\mathcal{A}}$ with $w_{H^2}\in H^2(\Omega)$ and $w_{\mathcal{A}}$ analytic.
	Further, there exist $k$- and $\Phi$-independent constants $C, \gamma>0$ such that for any $m\in \mathbb{N}_0$
	\begin{align*}
	\|w_\mathcal{A}\|_{1,k}&\leq C\Cdata,\\
	\|\nabla^{m+2}w_\mathcal{A}\|_{0}&\leq C\gamma^m k^{-1} \max\{m, k\}^{m+2}\,\Cdata,\\
	\|w_{H^2}\|_2+k\|w_{H^2}\|_{1,k}&\leq C\Cdata.
	\end{align*}
\end{proposition}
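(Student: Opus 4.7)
The plan is to treat the variable-coefficient contribution in \eqref{eq:auxpb} as a perturbation and reduce the problem to the well-known solution splitting for the standard constant-coefficient Helmholtz equation with impedance boundary. Concretely, I would rewrite \eqref{eq:auxpb} as
\begin{equation*}
(\nabla w,\nabla v) - k^2(w,v) + ik(w,v)_\Gamma = (\tilde f, v) + (g,v)_\Gamma \qquad \forall v\in H^1(\Omega),
\end{equation*}
where $\tilde f := f + k^2\varepsilon\chi_D|\Phi|^2 w$. Thus $w$ solves a standard Helmholtz problem with the same boundary data $g$ but a modified right-hand side $\tilde f$.

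The first step is to control $\|\tilde f\|_0$ by $\Cdata$. I would choose $\theta_2\le \theta_1$ so that the a priori bound \eqref{eq:stabauxpb} applies and gives $\|w\|_0\le C_1\Cdata/k$. Combined with the smallness assumption this yields
\begin{equation*}
k^2\varepsilon\|\Phi\|_{L^\infty(D)}^2\|w\|_0 \le k\varepsilon\|\Phi\|_{L^\infty(D)}^2\cdot C_1\Cdata \le C_1\theta_2\,\Cdata,
\end{equation*}
hence $\|\tilde f\|_0 \le (1+C_1\theta_2)\Cdata$. The ``effective data'' for the reformulated linear problem is therefore still bounded by a constant multiple of $\Cdata$, uniformly in $\Phi$.

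The second step is to invoke the classical solution splitting for the constant-coefficient Helmholtz equation with impedance boundary on a star-shaped domain with analytic boundary (cf.\ \cite{MelS10,MelS11}). Applied with data $(\tilde f, g)$ it furnishes $w = w_{H^2} + w_{\mathcal A}$ together with the stated analyticity-type estimates for $w_{\mathcal A}$ and the $H^2$-bound for $w_{H^2}$, but with $\Cdata$ replaced by $\|\tilde f\|_0 + \|g\|_{H^{1/2}(\Gamma)}$. Substituting the Step~1 bound on $\|\tilde f\|_0$ and using $k\gtrsim 1$ then yields all three estimates in the claim; in particular, the coarse bound $\|w_{\mathcal A}\|_{1,k}\lesssim \Cdata$ follows from the triangle inequality together with \eqref{eq:stabauxpb} and the $H^2$-estimate on $w_{H^2}$.

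The main subtlety I anticipate is conceptual rather than computational: one must check that the splitting of \cite{MelS10,MelS11} is (or can be) formulated for a merely $L^2$ source rather than requiring analyticity of $f$, so that applying it with $\tilde f\in L^2(\Omega)$ is legitimate, and that the constants and the exponential rate $\gamma$ produced by that splitting are independent of $\Phi$ and $k$. Once this is settled, no further iteration of the splitting is needed, and everything else is elementary perturbation bookkeeping.
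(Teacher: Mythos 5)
Your proposal is correct, but it takes a genuinely different route from the paper. The paper does \emph{not} absorb the perturbation into the right-hand side in one shot; instead it re-runs the Melenk--Sauter construction directly on the variable-coefficient equation: it applies the high/low-frequency filters to $f$, defines candidate pieces $w_{\mathcal A}^{I,II}$ and $w_{H^2}^{I,II}$ via the constant-coefficient solution operators $S_k$, $N_k$, $S_k^\Delta$, observes that the remainder again solves \eqref{eq:auxpb} with a new source $\tilde f$ satisfying $\|\tilde f\|_0\le \tilde q\,\|f\|_0$ with $\tilde q<1$ (the contraction coming partly from the frequency-cutoff parameter and partly from the smallness $k\varepsilon\|\Phi\|_{L^\infty(D)}^2\le\theta_2$), and then iterates, writing $w$ as a geometric series of $H^2$ and analytic pieces. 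Your argument instead writes $w=S_k(\tilde f,g)$ with $\tilde f=f+k^2\varepsilon\chi_D|\Phi|^2w\in L^2(\Omega)$, bounds $\|\tilde f\|_0\lesssim\Cdata$ via \eqref{eq:stabauxpb}, and invokes the full constant-coefficient splitting theorem (which is indeed stated for $L^2$ volume data and impedance boundary data on star-shaped domains with analytic boundary, so the subtlety you flag is unproblematic) as a black box. Your route is shorter and sidesteps the need to verify that the sum of a geometric series of analytic functions again satisfies the derivative bounds with uniform $C,\gamma$; the paper's route is more self-contained, makes the role of the smallness assumption in the contraction explicit, and would generalize to perturbations that cannot simply be moved to the right-hand side. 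Both arguments rest on the same a priori estimate for $w$ and the same Melenk--Sauter building blocks, and both deliver $\Phi$-independent constants.
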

The proof is presented in the appendix.

\subsection{Convergence of scheme \eqref{eq:simplenewtonitcont}}\label{subsec:conviter}
We now show that scheme \eqref{eq:simplenewtonitcont} satisfies a contraction property and therefore, the iteration sequence converges to the (unique) solution $u$ of \eqref{eq:NLHweak}.

\begin{proposition}\label{prop:convityua}
	Let $\{\uit{l}\}_{l\in \mathbb N_0}$ be defined via \eqref{eq:simplenewtonitcont} starting (for simplicity) from $\uit{0}\equiv 0$. 
	If  
	\begin{equation}\label{eq:smallnessyua}
	C_1C_\infty^2k^{d-2}\varepsilon\Cdata^2\leq q(1-q)^2
	\end{equation}
	for some $q< \min\{\frac{1}{6}, \frac{C_1\theta_1}{2}\}$ with $\theta_1$ introduced in Section \ref{subsec:linearized},
	we have for iteration scheme \eqref{eq:simplenewtonitcont} that
	\[\|\uit{l+1}-\uit{l}\|_{1,k}\leq \frac 12 \|\uit{l}-\uit{l-1}\|_{1,k}\]
	and the sequence $\{\uit{l}\}_l$ converges linearly to the solution $u$ of \eqref{eq:NLHweak}.
\end{proposition}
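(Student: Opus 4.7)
The idea is to interpret scheme \eqref{eq:simplenewtonitcont} as a fixed-point iteration for the map $\Psi\colon \Phi \mapsto \tilde w$ defined through the linearized Helmholtz equation \eqref{eq:auxpbyua}, and to show that $\Psi$ is a $\tfrac12$-contraction on an appropriate $H^1$-ball that traps all iterates.

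The decisive algebraic step produces a linearized Helmholtz equation for the iteration difference. Writing \eqref{eq:simplenewtonitcont} at two consecutive steps, subtracting, and adding and subtracting the term $\mathcal A(\uit{l}; \uit{l}, v)$ yields
\[
\mathcal A(\uit{l};\, \uit{l+1} - \uit{l},\, v) = k^2\varepsilon\bigl((|\uit{l}|^2 - |\uit{l-1}|^2)(\uit{l} - \uit{l-1}) + \uit{l-1}\uit{l}\,\overline{(\uit{l} - \uit{l-1})},\, v\bigr)_D,
\]
via the algebraic identity $-(|a|^2 a - |b|^2 b) + 2(|a|^2 - |b|^2)a = (|a|^2 - |b|^2)(a-b) + ab\,\overline{(a-b)}$. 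The key observation is that the right-hand side is \emph{linear} in $\uit{l} - \uit{l-1}$ with pointwise coefficient bounded by $3\max(|\uit{l}|, |\uit{l-1}|)^2$. Using $\mathcal A(\Phi; \cdot, \cdot) = \mathcal B_{\mathrm{lin}}(\sqrt 2\Phi; \cdot, \cdot)$, the stability estimate \eqref{eq:stabauxpb}, and $\|\uit{l} - \uit{l-1}\|_{L^2(D)} \leq k^{-1}\|\uit{l} - \uit{l-1}\|_{1,k}$, one obtains
\[
\|\uit{l+1} - \uit{l}\|_{1,k} \leq 3C_1\, k\varepsilon\, \max\bigl(\|\uit{l}\|_{L^\infty(D)},\, \|\uit{l-1}\|_{L^\infty(D)}\bigr)^2\, \|\uit{l} - \uit{l-1}\|_{1,k}.
\]

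The heart of the proof is a simultaneous induction on $l$ establishing: (a) problem \eqref{eq:auxpbyua} with $\Phi = \uit{l-1}$ is uniquely solvable, i.e.\ $k\varepsilon\|\uit{l-1}\|_{L^\infty(D)}^2 \leq \theta_1/2$; (b) the $H^1$-bound $\|\uit{l}\|_{1,k} \leq C_1\Cdata/(1-q)$; and (c) for $l \geq 2$, the contraction $\|\uit{l} - \uit{l-1}\|_{1,k} \leq \tfrac12\|\uit{l-1} - \uit{l-2}\|_{1,k}$. Invariant (b) is obtained by telescoping the contractions proved so far, starting from $\|\uit{1}\|_{1,k} \leq C_1\Cdata$ (via \eqref{eq:stabauxpbyua1} applied to $\uit{0} \equiv 0$). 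Invariant (a) follows from (b) and the $L^\infty$-bound \eqref{eq:stabauxpbyua2}, which controls $\|\uit{l-1}\|_{L^\infty(D)}$ by a constant times $C_\infty k^{(d-3)/2}\Cdata$; the smallness condition \eqref{eq:smallnessyua} together with $q < C_1\theta_1/2$ then delivers $k\varepsilon\|\uit{l-1}\|_{L^\infty(D)}^2 \leq \theta_1/2$. Finally, (c) follows by substituting the $L^\infty$-bound from (a) into the estimate above: \eqref{eq:smallnessyua} makes the resulting contraction factor at most of order $q$, which is below $1/2$ thanks to $q < 1/6$.

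Once the contraction is in force, $\{\uit{l}\}$ is Cauchy in $H^1(\Omega)$ and converges to some $u^*$. Passing to the limit in \eqref{eq:simplenewtonitcont}, where the cubic nonlinearity is controlled by the Sobolev embedding $H^1(\Omega) \hookrightarrow L^6(\Omega)$ (valid for $d \leq 3$), identifies $u^*$ as a weak solution of \eqref{eq:NLHweak}; the uniqueness result of \cite{WuZ18} under \eqref{eq:smallnessdatacont} (implied by \eqref{eq:smallnessyua}) gives $u^* = u$, and the contraction estimate immediately implies linear convergence. I expect the bookkeeping in the simultaneous induction to be the main obstacle: the specific form $q(1-q)^2$ in \eqref{eq:smallnessyua} is tuned precisely so that the three constraints $q < 1/6$ (for the contraction factor), $q < C_1\theta_1/2$ (for preserving the $L^\infty$-smallness needed by \eqref{eq:stabauxpb}), and invariant (b) (for the $H^1$-norm of the iterates) remain mutually compatible when propagated through \eqref{eq:stabauxpbyua2} and the contraction estimate.
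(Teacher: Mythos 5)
Your proposal is correct and follows essentially the same route as the paper: derive the linearized equation for $\uit{l+1}-\uit{l}$ (your algebraic rearrangement of the cubic difference is equivalent to the paper's $|\uit{l-1}|^2(\uit{l-1}-\uit{l})+(|\uit{l}|^2-|\uit{l-1}|^2)\uit{l}$), propagate $H^1$- and $L^\infty$-a priori bounds on the iterates by induction to keep $k\varepsilon\|\uit{l}\|_{L^\infty(D)}^2\leq\theta_1/2$, and combine \eqref{eq:stabauxpb}, the $L^\infty$ bounds and $\|\cdot\|_0\leq k^{-1}\|\cdot\|_{1,k}$ with \eqref{eq:smallnessyua} to get the contraction factor $3q<1/2$. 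The only cosmetic difference is that the paper obtains the uniform bound $\|\uit{l}\|_{1,k}\leq C_1\Cdata/(1-q)$ directly from the recursion $\|\uit{l+1}\|_{1,k}\leq C_1\Cdata+q\|\uit{l}\|_{1,k}$ furnished by \eqref{eq:stabauxpbyua1} (a geometric series in $q$), rather than by telescoping the contraction as you propose, which decouples the a priori bounds from the contraction step and yields the precise constant used in the induction.
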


\begin{proof}
	\emph{First step: A priori estimates for $\uit{l}$.} We show by induction that for all $l\geq 1$, $\uit{l}$ is well-defined and satisfies
	\begin{align*}\|\uit{l}\|_{1,k}&\leq\frac{1}{(1-q)}C_1 \Cdata, \qquad\qquad \|\uit{l}\|_2\leq \frac{1}{(1-q)}C_2  k\Cdata,\\ \|\uit{l}\|_{L^\infty(D)}&\leq \frac{1}{(1-q)}C_\infty k^{(d-3)/2}\Cdata.
	\end{align*}
	
	The case $l=1$ directly follows from $\uit{0}=0$ and \eqref{eq:stabauxpbyua1}--\eqref{eq:stabauxpbyua2}.
	Let the statement be satisfied for $l$. Since $k\varepsilon\|\uit{l}\|_{L^\infty(D)}^2\leq C_\infty^2\frac{1}{(1-q)^2} k^{d-2}\varepsilon\Cdata^2\leq \theta_1/2$ by the assumptions, the discussion in Section \ref{subsec:linearized} yields that $\uit{l+1}$ is indeed well-defined. Moreover, we deduce from \eqref{eq:stabauxpbyua1} that
	\begin{align*}
	\|\uit{l+1}\|_{1,k}&\leq (C_1\Cdata+q\|\uit{l}\|_{1,k}),
	\end{align*}
	which recursively yields with $q<1$ that
	\begin{align*}
	\|\uit{l+1}\|_{1,k}\leq C_1\Cdata\sum_{j=0}^{l}q^j\leq \frac{1}{(1-q)}C_1\Cdata.
	\end{align*}
	Employing \eqref{eq:stabauxpbyua2}, we furthermore obtain
	\begin{align*}
	\|\uit{l+1}\|_2&\leq C_2 k\Bigl(\Cdata+\frac{\theta_1}{2}\|\uit{l}\|_{1,k}\Bigr)\leq C_2k\Cdata(1+\frac{q}{(1-q)})\\
	\|\uit{l+1}\|_{L^\infty(D)}&\leq C_\infty  k^{(d-3)/2}\Bigl(\Cdata+\frac{\theta_1}{2}\|\uit{l}\|_{1,k}\Bigr)\leq \frac{1}{1-q}C_\infty k^{(d-3)/2}\Cdata,
	\end{align*} 
	which finishes the first step.
	
	\emph{Second step: Contraction property.} Direct calculation shows that $\uit{l+1}-\uit{l}$ solves
	\[\mathcal{A}(\uit{l}; \uit{l+1}-\uit{l}, v)=k^2\varepsilon(|\uit{l-1}|^2(\uit{l-1}-\uit{l})+(|\uit{l}|^2-|\uit{l-1}|^2)\uit{l}, v)_D.\]
	The estimates from the first step yield
	\begin{align*}
	&\!\!\!\!\|\uit{l+1}-\uit{l}\|_{1,k}\\
	&\leq\frac{C_1}{(1-q)}k^2\varepsilon\Bigl( \|\uit{l-1}\|^2_{L^\infty(D)}\|\uit{l-1}-\uit{l}\|_0\\*
	&\qquad+\|\uit{l}\|_{L^\infty(D)}(\|\uit{l}\|_{L^\infty(D)}+\|\uit{l-1}\|_{L^\infty(D)})\|\uit{l}-\uit{l-1}\|_0\Bigr)\\
	&\leq 3\frac{C_1}{(1-q)^2}\varepsilon C_\infty^2k^{d-2}\Cdata^2\|\uit{l-1}-\uit{l}\|_{1,k}\leq 3q \|\uit{l-1}-\uit{l}\|_{1,k},
	\end{align*}
	where we used \eqref{eq:smallnessyua} in the last step. The assumption $q<1/6$ finishes the proof.
\end{proof}

The proposition explains the linear convergence observed in practice \cite{YuaL17}. However, the required smallness of the data assumption is more restrictive than for the frozen nonlinearity scheme. Precisely, following the proofs of \cite{WuZ18}, we see that the contraction property \eqref{eq:contractionfrozennonlin} holds if $C_1C_\infty^2 k^{d-2}\varepsilon\Cdata^2\leq \tilde \theta_0$ for $\tilde\theta_0<\min\{\frac{1}{4},\theta_1 C_1\}$, which is more relaxed in comparison to \eqref{eq:smallnessyua}.
Hence, Proposition \ref{prop:convityua} does not explain the better ``robustness'' of the scheme with respect to the data observed in \cite{YuaL17} as well as in our experiments in Section \ref{sec:numexp}.

\section{Nonlinear Helmholtz equation in the discrete setting}
\label{sec:nonlineardiscrete}
In this section, we turn to the finite element approximation of \eqref{eq:NLHweak}. We introduce the discretization using finite elements with higher-order polynomials in Section \ref{subsec:fem}. We then present the results of a priori error analysis, where we first consider the linearized problems in Section \ref{subsec:analysislinearized} and then the nonlinear problem in Section \ref{subsec:femnonlinear}. All proofs are collected in Section \ref{sec:proofs} and the appendix.

\subsection{Finite element discretization and notation}\label{subsec:fem}
Since we assume $\Gamma$ to be analytic, we will consider curved elements in order to have a conforming discretization. We follow the typical procedure as outlined in, e.g., \cite[Section 5]{MelS10}.
We assume that there exists a polyhedral/polygonal domain $\widetilde \Omega$ and a bi-Lipschitz mapping $\xi:\widetilde \Omega\to \Omega$. 
Let $\widetilde{\mathcal T}_h$ denote an admissible, shape regular simplicial mesh of $\widetilde \Omega$. We assume that the restrictions $\xi|_{\widetilde T}$ are analytic for all $\widetilde T\in \widetilde{\mathcal T}_h$.
We then set $\mathcal T_h=\{\xi(\widetilde T): T\in \widetilde{\mathcal T}_h\}$ as our mesh on $\Omega$ with mesh size $h\coloneqq \max_{T\in \mathcal T_h}\operatorname{diam} T$.
Note that for any $T=\xi(\widetilde T)\in \mathcal T_h$, there exists an affine, bijective mapping $A_T:\widehat T\to \widetilde T$ from the reference element $\widehat T$ (the unit simplex). Consequently, we have a mapping $F_T:\widehat T\to T$ via $F_T=R_T\circ A_T$ with $R_T=\xi|_{\widetilde T}$. We assume $F_T$, $R_T$ and $A_T$ to satisfy the smoothness and scaling assumptions of \cite[Assumption 5.2]{MelS10}.

For such a so-called quasi-uniform regular simplicial mesh $\mathcal T_h$, we denote the finite element space of piecewise (mapped) polynomials of degree $p$ by $V_{h,p}$, i.e.,
\[V_{h,p}\coloneqq \{v\in H^1(\Omega): v|_T\circ F_T\in \mathbb P_p(T)\text{ for all }T\in \mathcal T_h\},\]
where $\mathbb P_p$ denotes the polynomials of degree $p$.
We now seek the discrete solution $u_{h,p}\in V_{h,p}$ such that
\begin{equation}\label{eq:nhl-fem}
\mathcal{B}(u_{h,p}, v_h)=(f, v_h)+(g, v_h)_\Gamma 
\end{equation}
for all $v_h\in V_{h,p}$.
This yields a nonlinear system that we can solve via the discrete versions of the iteration schemes \eqref{eq:froznonlinitcont} or \eqref{eq:simplenewtonitcont}. As usual, these discrete versions are obtained by a Galerkin procedure, i.e., ansatz as well as test functions come from the space $V_{h,p}$.
As already done in the continuous case, we start the iterations with $\uhit{0}\equiv 0$ for simplicity.

\smallskip

We collect further finite element-related notation that will turn out useful in the error analysis.

Let $P_h:H^1(\Omega)\to V_{h,p}$ be the elliptic projection as defined by \cite{WuZ18} via
\begin{equation}\label{eq:defellproj}
(\nabla v_h, \nabla P_h \psi)+ik(v_h, P_h \psi)_\Gamma=(\nabla v_h, \nabla \psi)+ik(v_h, \psi)_\Gamma\qquad \text{for all }v_h\in V_{h,p}.
\end{equation}
This projection is well-defined and satisfies
\begin{equation}\label{eq:propellproj}
\|\psi-P_h\psi\|_0\lesssim h\|\psi-P_h \psi\|_{1,k}\lesssim  \inf_{v_h\in V_{h,p}}\|\psi-v_h\|_{1,k}.
\end{equation}
$P_h$ is related to the discrete Laplace operator $L_h:V_{h,p}\to V_{h,p}$ defined via
\begin{equation}\label{eq:Lh}
(L_h v_h, \psi_h)=(\nabla v_h, \nabla\psi_h)+ik(v_h, \psi_h)_\Gamma \qquad \text{for all }\psi_h\in V_{h,p}.
\end{equation}

Further, following \cite{DuWu2015}, we introduce discrete $H^j(\Omega)$-norms on $V_{h,p}$. We define the discrete operator $A_h:V_{h,p}\to V_{h,p}$ via
\begin{equation}\label{eq:Ah}
(A_h v_h, \psi_h):=(\nabla v_h, \nabla \psi_h)+(v_h, \psi_h).
\end{equation}
Let
\[0<\lambda_{1,h}<\lambda_{2,h}<\ldots \lambda_{\dim V_{h,p}, h}\]
denote its eigenvalues, which are all positive, and let $\varphi_{j,h}$ for $j=1,\ldots \dim V_{h,p}$ be the corresponding discrete eigenfunctions. For any real number $j$, the operator $A_h^j$ is defined via
\[A_h^j v_h=\sum_{l=1}^{\dim V_{h,p}}\lambda_{l,h}^j a_l\varphi_{l,h}\qquad \text{for }v_h=\sum_{l=1}^{\dim V_{h,p}} a_l\varphi_{l,h}.\]
The discrete norms on $V_{h,p}$ are then defined for any integer $j$ via
\begin{equation}\label{eq:defdiscretenorm}
\|v_h\|_{j,h}\coloneqq \|A_h^{j/2}v_h\|_0.
\end{equation}
For any $v_h\in V_{h,p}$, it holds that (see~\cite[Lemma 4.1, 4.2]{DuWu2015})
\begin{enumerate}
	\item for any integer $j$,
	\begin{equation}\label{eq:discretenorm-inverse}
	\|v_h\|_{j,h}\lesssim h^{-1}\|v_h\|_{j-1,h}
	\end{equation} 
	\item for any integer $0\leq j\leq p+1$,
	\begin{equation*}
	\|v_h\|_{-j, h}\lesssim \sum_{l=0}^{j}h^{j-l}\|v_h\|_{-l}.
	\end{equation*}
\end{enumerate}

\subsection{FEM error analysis for the auxiliary problem}\label{subsec:analysislinearized}
We can now analyze the linear auxiliary problem in the discrete setting. According to the discussion in Section \ref{subsec:linearized}, we focus on problem \eqref{eq:auxpbfem} below using $\mathcal{B}_\mathrm{lin}$, but note that everything carries over to the discrete version of \eqref{eq:auxpbyua} by the relation of $\mathcal{A}$ and $\mathcal{B}_\mathrm{lin}$.
Let $\Phi\in L^\infty(D)\cap H^1(\Omega)$ be given. Define $w_h\in V_{h,p}$ as the solution of
\begin{equation}\label{eq:auxpbfem}
\mathcal{B}_\mathrm{lin}(\Phi; w_h, v_h)=(f, v_h)+(g, v_h)_\Gamma
\end{equation}
for all $v_h\in V_{h,p}$.

\begin{lemma}\label{lem:erroraux}
	If $k\varepsilon\|\Phi\|_{L^\infty(D)}^2\leq \min\{\theta_1,\theta_2\}$ with the constants introduced in Section \ref{sec:setting}, there exists a constant $C_0>0$ such that, if $k(kh)^{2p}\leq C_0$, the finite element solution $w_h$ to the auxiliary problem \eqref{eq:auxpbfem} exists, is unique and satisfies
	\begin{equation}\label{eq:erraux}
	\|w-w_h\|_{1,k}\lesssim (1+k(kh)^p)\inf_{v_h\in V_{h,p}}\|w-v_h\|_{1,k}\lesssim (h+(kh)^p+k(kh)^{2p})\Cdata,
	\end{equation}
	where the constant in $\lesssim$ may depend on $p$.
	Further, $w_h$ is stable in the following sense
	\begin{equation}\label{eq:stabfemaux}
	\|w_h\|_{1,k}\lesssim \Cdata.
	\end{equation}
\end{lemma}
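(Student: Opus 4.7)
The plan is to adapt the Schatz-type duality argument of \cite{DuWu2015} for the pre-asymptotic analysis of the high-frequency Helmholtz equation, with the solution splitting of Proposition~\ref{prop:splitting} taking over the role played by the splitting for the constant-coefficient case in \cite{MelS10}. Since \eqref{eq:auxpbfem} is a finite-dimensional linear system, existence and uniqueness of $w_h$ reduce to injectivity of the discrete operator, which will follow from the quasi-optimal error bound applied to $f\equiv 0$, $g\equiv 0$. Similarly, once \eqref{eq:erraux} is established, the stability \eqref{eq:stabfemaux} is immediate by the triangle inequality $\|w_h\|_{1,k}\leq \|w\|_{1,k}+\|w-w_h\|_{1,k}$ combined with the continuous a priori bound \eqref{eq:stabauxpb} and the resolution condition $k(kh)^{2p}\lesssim 1$.

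First I would establish the best approximation error in $V_{h,p}$. Writing $w=w_{H^2}+w_{\mathcal{A}}$ as in Proposition~\ref{prop:splitting}, a standard quasi-interpolant (Scott--Zhang on the reference element, pushed forward by $F_T$) yields
\[
\inf_{v_h\in V_{h,p}}\|w_{H^2}-v_h\|_{1,k}\lesssim h\|w_{H^2}\|_2+kh\|w_{H^2}\|_{1,k}\lesssim h\,\Cdata,
\]
and the analytic approximation estimates of \cite[Proposition~5.3]{MelS10} together with the Gevrey-type derivative bounds on $w_{\mathcal{A}}$ give, after optimizing in $m$,
\[
\inf_{v_h\in V_{h,p}}\|w_{\mathcal{A}}-v_h\|_{1,k}\lesssim (kh)^p\,\Cdata.
\]
Summing, $\inf_{v_h\in V_{h,p}}\|w-v_h\|_{1,k}\lesssim (h+(kh)^p)\,\Cdata$.

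The heart of the proof is the pre-asymptotic error estimate. Combining the Garding-type inequality
\[
\operatorname{Re}\mathcal{B}_{\mathrm{lin}}(\Phi;v,v)+2k^2(1+\varepsilon\|\Phi\|_{L^\infty(D)}^2)\|v\|_0^2\gtrsim \|v\|_{1,k}^2
\]
with Galerkin orthogonality yields, for any $v_h\in V_{h,p}$,
\[
\|w-w_h\|_{1,k}^2\lesssim \|w-v_h\|_{1,k}^2+k^2\|w-w_h\|_0\,\|w-w_h\|_{1,k}.
\]
To bound $\|w-w_h\|_0$ I set up the adjoint problem with datum $w-w_h$: thanks to the smallness assumption $k\varepsilon\|\Phi\|_{L^\infty(D)}^2\leq \min\{\theta_1,\theta_2\}$, the adjoint solution inherits a splitting of exactly the same type as in Proposition~\ref{prop:splitting}, so that its best approximation in $V_{h,p}$ is again $\lesssim (h+(kh)^p)\|w-w_h\|_0$. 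Testing against $w-w_h$ and using Galerkin orthogonality gives $\|w-w_h\|_0\lesssim (h+(kh)^p)\|w-w_h\|_{1,k}$, hence $k^2\|w-w_h\|_0\lesssim (kh+k(kh)^p)k\|w-w_h\|_{1,k}$. Absorbing on the left under $k(kh)^{2p}\leq C_0$ sufficiently small yields the quasi-optimal estimate $\|w-w_h\|_{1,k}\lesssim (1+k(kh)^p)\inf_{v_h\in V_{h,p}}\|w-v_h\|_{1,k}$, which together with the best approximation bound above implies \eqref{eq:erraux}.

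The main obstacle is ensuring that the Du--Wu duality argument still closes in the present variable, discontinuous-coefficient situation. The point is that Proposition~\ref{prop:splitting} — obtained via a perturbation argument from the constant-coefficient splitting under the smallness assumption — provides the splitting of the adjoint solution with constants independent of $k$ and $\Phi$. Without such a splitting, standard heterogeneous-Helmholtz results \cite{LaSpWu2021b,BeChMe2021,Pem2020} would not be directly applicable because of the jump of $n=1+\chi_D\varepsilon|\Phi|^2$ across $\partial D$; here it is precisely the smallness of $k\varepsilon\|\Phi\|_{L^\infty(D)}^2$ that allows us to bypass this difficulty and recover the same pre-asymptotic rates as in the constant-coefficient case.
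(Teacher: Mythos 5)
There is a genuine gap in the core of your argument: the duality step you describe does not close under the stated resolution condition $k(kh)^{2p}\leq C_0$. From your G\aa rding inequality and $\|w-w_h\|_0\lesssim (h+(kh)^p)\|w-w_h\|_{1,k}$ you arrive at
\begin{equation*}
\|w-w_h\|_{1,k}^2\lesssim \|w-v_h\|_{1,k}^2+\bigl(k^2h+k^2(kh)^p\bigr)\|w-w_h\|_{1,k}^2,
\end{equation*}
and absorbing the last term requires $k^2(kh)^p$ (even with the sharper bookkeeping $k^2\|e\|_0^2\leq k^2\sigma^2\|e\|_{1,k}^2$, it requires $k(kh)^p$) to be small. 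This is the classical Schatz/Aubin--Nitsche argument and it only yields quasi-optimality in the \emph{asymptotic} regime $k(kh)^p\lesssim 1$; it fails under $k(kh)^{2p}\leq C_0$. Concretely, for $p=2$ and $h\sim k^{-5/4}$ one has $k(kh)^{2p}=O(1)$ but $k(kh)^p=k^{1/2}\to\infty$, so the absorption is impossible. The entire point of the lemma (and of \cite{DuWu2015}, which you cite but do not actually follow) is to obtain the weaker, pre-asymptotic condition.

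The missing mechanism is the one the paper highlights in Remark \ref{rem:resolcond}: split the error as $w-w_h=\rho+\eta_h$ with $\rho$ the elliptic-projection error and $\eta_h=w_h-\overline{P_h\overline{w}}\in V_{h,p}$ discrete, and in the duality argument estimate the critical pairing not by Cauchy--Schwarz in $L^2$ but as
\begin{equation*}
|(\eta_h, z-P_hz)|=|(\eta_h,\Pi_hz-P_hz)|\lesssim \|\eta_h\|_{p-1,h}\,\|\Pi_hz-P_hz\|_{1-p,h}\lesssim \|\eta_h\|_{p-1,h}\,h^p(h+(kh)^p)\|\eta_h\|_0,
\end{equation*}
which gains the extra factor $h^p$. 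Combined with the recursion $\|\eta_h\|_{m,h}\lesssim k\|\eta_h\|_{m-1,h}+\theta_2h^{1-m}\|\eta_h\|_0+h^{1-m}\|\rho\|_{1,k}$ (obtained from the operator $A_h$ and inverse estimates), this turns the absorption condition into $k(kh)^{2p}\leq C_0$ and yields $\|\eta_h\|_0\lesssim(h+(kh)^p)\|\rho\|_{1,k}$. Everything else in your proposal --- the best-approximation bound via Proposition \ref{prop:splitting}, treating the variable coefficient as a small perturbation controlled by $k\varepsilon\|\Phi\|^2_{L^\infty(D)}\leq\min\{\theta_1,\theta_2\}$, and deducing existence, uniqueness and \eqref{eq:stabfemaux} at the end --- matches the paper, but without the discrete negative-norm device the stated resolution condition cannot be reached.
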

In the proof of the above lemma, we will also establish the following $L^2$-error estimate
\begin{equation}\label{eq:errL2aux}
\|w-w_h\|_0\lesssim (h^2+h(kh)^p +(kh)^{2p})\Cdata.
\end{equation}
Lemma \ref{lem:erroraux} essentially transfers \cite{DuWu2015} to a case where the coefficient $n$ in the Helmholtz problem is no longer constant. 
In contrast to the approach in \cite[Sec.~2.4]{Pem2020} and \cite{GaSp23}, we treat $n$ as a sufficiently small perturbation from the constant coefficient case. Therefore, some of our assumptions are different, in particular we can allow for lower regularity in $n$. Further, since we do not have a coefficient in the gradient part, i.e., $A=1$, we can use Robin boundary conditions everywhere, cf. the discussion in \cite[Rem.~2.62]{Pem2020}. Concerning the occurrence of $kh$ as first term in \eqref{eq:erraux}, we refer to the discussion after Theorem~\ref{thm:errornonlin}.
For convenience, we include the proof of Lemma~\ref{lem:erroraux} (along the lines of \cite{DuWu2015}) in the appendix.

Besides the stability of $w_h$ in the energy norm, we have the following $L^\infty$-estimate, which is important for the nonlinear case.

\begin{lemma}\label{lem:linfaux}
	If $k\varepsilon\|\Phi\|_{L^\infty(D)}^2\leq \min\{\theta_1,\theta_2\}$ with the constants introduced in Section \ref{sec:setting}, there exists a constant $C_1>0$ such that, if $k(kh)^{p+1}\leq C_1$, the solution $w_h$ of \eqref{eq:auxpbfem} satisfies
	\begin{equation}\label{eq:linfstabaux}
	\|w_h\|_{L^\infty(D)}\lesssim |\ln h|^{\overline p}\, k^{(d-3)/2}\Cdata,
	\end{equation}
	where $\overline p=1$ for $p=1$ and $\overline{p}=0$ for $p\geq 2$.
\end{lemma}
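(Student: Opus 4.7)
The plan is to bootstrap the continuous $L^\infty$-bound \eqref{eq:stabauxpb} to the discrete solution via a Nitsche-type duality argument tailored to the linearized Helmholtz operator.

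By the triangle inequality $\|w_h\|_{L^\infty(D)}\leq \|w\|_{L^\infty(D)}+\|w-w_h\|_{L^\infty(D)}$ and the continuous bound $\|w\|_{L^\infty(D)}\lesssim k^{(d-3)/2}\Cdata$ from \eqref{eq:stabauxpb}, it suffices to prove an $L^\infty$-error estimate of the same order (up to the $|\ln h|^{\overline p}$-factor). For each $x_0\in\overline D$ I would consider the (regularized) Green's function $G^{x_0}\in H^1(\Omega)$ of the adjoint problem, i.e.,  $\mathcal{B}_{\mathrm{lin}}(\Phi;v,G^{x_0}) = v(x_0)$ interpreted in a regularized sense via a suitable $\delta^h_{x_0}\in V_{h,p}$ reproducing point values on the discrete space. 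Since the adjoint has the same structure as \eqref{eq:auxpb}, Proposition~\ref{prop:splitting} applies to $G^{x_0}$ and yields a splitting $G^{x_0}=G^{x_0}_{H^2}+G^{x_0}_{\mathcal A}$ with $k$-explicit norm bounds. Galerkin orthogonality then gives, for every $v_h\in V_{h,p}$,
\[
(w-w_h)(x_0) = \mathcal{B}_{\mathrm{lin}}(\Phi;\,w-w_h,\,G^{x_0}-v_h),
\]
and continuity of $\mathcal{B}_{\mathrm{lin}}$ in $\|\cdot\|_{1,k}$ produces
\[
|(w-w_h)(x_0)|\lesssim \|w-w_h\|_{1,k}\,\inf_{v_h\in V_{h,p}}\|G^{x_0}-v_h\|_{1,k}.
\]

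Substituting the $H^1_k$-error bound $\|w-w_h\|_{1,k}\lesssim(h+(kh)^p+k(kh)^{2p})\Cdata$ from Lemma~\ref{lem:erroraux} and bounding $\inf_{v_h}\|G^{x_0}-v_h\|_{1,k}$ via the splitting of Proposition~\ref{prop:splitting} together with standard high-order interpolation on the analytic part, the two factors combine into the claimed $k^{(d-3)/2}\Cdata$-bound under the sharper resolution condition $k(kh)^{p+1}\lesssim 1$. This condition is precisely what is needed to close the duality estimate, and it explains why the $L^\infty$-stability demands a tighter resolution than the $H^1_k$-error of Lemma~\ref{lem:erroraux}.

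The main obstacle is the near-singular behavior of $G^{x_0}$: the Dirac-type data keep $G^{x_0}$ only marginally $H^1$-regular, so a regularized Dirac or local cutoff is needed and all $k$- and $h$-dependencies must be tracked carefully through the splitting. The $|\ln h|^{\overline p}$-factor in the $p=1$ case arises from the sub-optimal $V_{h,p}$-approximation of the local $\log|x-x_0|$ (in $d=2$) or $|x-x_0|^{-1}$ (in $d=3$) singularity of $G^{x_0}$ by piecewise linears; for $p\geq 2$ the extra polynomial degrees overcome this singularity and no logarithm appears. Finally, the variable coefficient $n=1+\chi_D\varepsilon|\Phi|^2$ is handled as a small perturbation of the constant-coefficient case, as justified by the assumption $k\varepsilon\|\Phi\|_{L^\infty(D)}^2\leq \min\{\theta_1,\theta_2\}$.
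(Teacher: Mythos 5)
Your overall reduction via the triangle inequality to an $L^\infty$ bound on the Galerkin error, with the continuous estimate \eqref{eq:stabauxpb} handling $\|w\|_{L^\infty(D)}$, is a reasonable starting point, but the central step of your argument does not close. The inequality
\[
|(w-w_h)(x_0)|\lesssim \|w-w_h\|_{1,k}\,\inf_{v_h\in V_{h,p}}\|G^{x_0}-v_h\|_{1,k}
\]
is vacuous for the purpose at hand: the (regularized) Green's function is not in $H^1(\Omega)$ uniformly in the regularization parameter (its gradient behaves like $|x-x_0|^{-1}$ in $d=2$ and $|x-x_0|^{-2}$ in $d=3$), so $\|G^{x_0}\|_{1,k}$ grows like $|\ln h|^{1/2}$ resp.\ $h^{-1/2}$, and the best-approximation error $\inf_{v_h}\|G^{x_0}-v_h\|_{1,k}$ does not tend to zero -- it is at best $O(1)$ and in $d=3$ it diverges. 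Consequently the product of the two factors cannot produce the claimed $k^{(d-3)/2}\Cdata$ bound, and no choice of resolution condition repairs this; you would need the full weighted-norm machinery of Schatz--Wahlbin rather than a crude global Cauchy--Schwarz. Your attribution of the $|\ln h|^{\overline p}$ factor is also off: it does not come from piecewise-linear approximation of the Green's function singularity (raising $p$ does not improve local approximation of a singular function), but from the interior maximum-norm estimates of Schatz and Wahlbin, which carry a logarithm precisely and only for $p=1$. Finally, you never exhibit where the factor $k^{(d-3)/2}$ or the condition $k(kh)^{p+1}\leq C_1$ actually enters quantitatively.

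The paper avoids applying any max-norm estimate to the indefinite Helmholtz operator directly. It writes $w_h$ via the elliptic projection $\tilde w_h=\overline{P_h\overline{w}}$ and an auxiliary function $\eta$ solving a \emph{coercive} problem whose right-hand side is $k^2(1+\chi_D\varepsilon|\Phi|^2)(w-w_h)$, so that $\eta_h=\tilde w_h-w_h$ is the standard Galerkin approximation of $\eta$ for that coercive problem. The term $\|\eta\|_{L^\infty(D)}$ is then controlled by rewriting the equation for $\eta$ as a Helmholtz problem with small data and invoking the continuous stability \eqref{eq:stabauxpb} -- this is where $k^{(d-3)/2}$ appears and where the condition $k(kh)^{p+1}\leq C_1$ is needed to make $k^2\|w-w_h\|_0$ bounded via \eqref{eq:errL2aux}. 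The terms $\|\eta-\eta_h\|_{L^\infty(D)}$ and $\|w-\tilde w_h\|_{L^\infty(D)}$ are handled by the Schatz--Wahlbin interior estimates for the coercive problems, which is the sole source of $|\ln h|^{\overline p}$. If you want to pursue a Green's-function route, you would have to reconstruct essentially that weighted-norm analysis, at which point you are reproving Schatz--Wahlbin rather than citing it.
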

The proof follows the lines of \cite{WuZ18,JiLiWuZo2022} using the interior $L^\infty$ estimates of \cite{SchW1977}. The latter only introduce an $|\ln h|$-dependence in the case $p=1$. Note that \cite{JiLiWuZo2022} recently showed that even in the linear case, the $|\ln h|$ can be removed \emph{if} $d=2$.

\begin{remark}[Why the resolution conditions in Lemmas~\ref{lem:erroraux} and \ref{lem:linfaux} differ]\label{rem:resolcond}
	Comparing to the linear Helmholtz case, the resolution condition in Lemma \ref{lem:erroraux} is the known one  from \cite{DuWu2015}, while the  condition in Lemma \ref{lem:linfaux} corresponds to an older, sub-optimal condition in \cite{ZhWu2013}.
	To get the improved result similar to Lemma \ref{lem:erroraux}, \cite{DuWu2015} uses suitable discrete $H^j$-norms, cf.~\eqref{eq:defdiscretenorm}, and negative Sobolev norms in the estimation of $L^2$-scalar products (between a discrete and a projection error). In the $L^\infty$-norm estimate, however, we do not get such $L^2$-scalar products and therefore cannot use this technique. This is the main reason for the tighter resolution condition in Lemma \ref{lem:linfaux}. Note that both conditions agree for $p=1$, in particular the condition in Lemma \ref{lem:linfaux} agrees with the result for $p=1$ in \cite{WuZ18}.
\end{remark}

\subsection{Finite element method for the nonlinear problem}\label{subsec:femnonlinear}
We are now prepared to analyze the higher-order finite element method for the iteration schemes \eqref{eq:froznonlinitcont}  and \eqref{eq:simplenewtonitcont} applied to the nonlinear Helmholtz equation. 
We emphasize once more that the analysis in \cite{WuZ18,JiLiWuZo2022}, which partly inspires our proofs, is limited to $p=1$ and either iteration scheme \eqref{eq:froznonlinitcont} or Newton's method. 
We start with the convergence of the discrete schemes and, thereby, existence and uniqueness of the solution $u_{h,p}$ to \eqref{eq:nhl-fem}.

\begin{proposition}\label{prop:exfemnonlin}
	Let 
	\[k(kh)^{p+1}\leq C_1\]
	as in Lemma \ref{lem:linfaux}. Define 
	\[\sigma_j:=\tilde C_j|\ln h|^{2\overline p}\varepsilon k^{d-2}\Cdata^2,\]
	where $\tilde C_1, \tilde C_2$ associated with schemes \eqref{eq:froznonlinitcont} and \eqref{eq:simplenewtonitcont}, respectively, are some constants.
	If $\sigma_j<1$, the associated sequence $\{\uhit{l}\}_{l\in \mathbb N_0}\subset V_{h,p}$  starting at $\uhit{0}\equiv 0$ converges to the unique solution $u_{h,p}$ of \eqref{eq:nhl-fem} with rate $\sigma_j$, i.e.,
	\begin{equation}\label{eq:convit}
	\|u_{h,p}-\uhit{l}\|_{1,k}\lesssim \sigma_j^l\Cdata.
	\end{equation}
	Further, $u_{h,p}$ satisfies the stability estimates
	\begin{equation}\label{eq:stabuhp}
	\|u_{h,p}\|_{1,k}\lesssim \Cdata\quad\text{and}\quad\|u_{h,p}\|_{L^\infty(D)}\lesssim |\ln h|^{\overline p} k^{(d-3)/2}\Cdata.
	\end{equation}
\end{proposition}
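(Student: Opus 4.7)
The plan is to mirror the continuous convergence proof of Proposition 2.5, but at the discrete level, replacing the stability and $L^\infty$ bounds from Section 2.3 by the discrete Lemmas \ref{lem:erroraux} and \ref{lem:linfaux}. The discrete linearized stability estimate \eqref{eq:stabfemaux}, combined with the $L^\infty$ bound \eqref{eq:linfstabaux}, is the exact analogue of \eqref{eq:stabauxpb}, so the proof structure should transfer verbatim once the resolution condition $k(kh)^{p+1}\leq C_1$ secures the $L^\infty$-bound needed to keep the coefficient $\Phi = \uhit{l-1}$ inside the admissible range of the auxiliary lemmas.

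First I would prove by induction that each iterate $\uhit{l}$ is well-defined and satisfies
\[\|\uhit{l}\|_{1,k}\lesssim \Cdata,\qquad \|\uhit{l}\|_{L^\infty(D)}\lesssim |\ln h|^{\overline p}k^{(d-3)/2}\Cdata.\]
The base case follows from $\uhit{0}=0$ by a direct application of Lemma \ref{lem:erroraux} and Lemma \ref{lem:linfaux} with $\Phi=0$. For the inductive step, the $L^\infty$ bound on $\uhit{l-1}$ yields $k\varepsilon\|\uhit{l-1}\|_{L^\infty(D)}^2\lesssim \sigma_j<1$, which, for $\tilde C_j$ chosen large enough (tying it to the constants in Lemmas \ref{lem:erroraux} and \ref{lem:linfaux}), is smaller than $\min\{\theta_1,\theta_2\}$. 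Thus both lemmas apply to the linearized problem defining $\uhit{l}$, directly producing the bounds for $\uhit{l}$. For scheme \eqref{eq:simplenewtonitcont}, the right-hand side carries an extra term $-k^2\varepsilon(|\uhit{l-1}|^2\uhit{l-1},\cdot)_D$; using the $L^\infty$ and $L^2$ bounds on $\uhit{l-1}$ together with an extension of Lemma \ref{lem:erroraux} to general $L^2$ right-hand sides (obtained from the same proof by replacing $\Cdata$ by $\|r\|_0$) gives exactly the discrete analogue of \eqref{eq:stabauxpbyua1}--\eqref{eq:stabauxpbyua2}.

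Second, I would establish the contraction. For scheme \eqref{eq:froznonlinitcont}, subtracting the defining equations for $\uhit{l+1}$ and $\uhit{l}$ and using $\mathcal{B}_\mathrm{lin}(\uhit{l};\uhit{l},v_h)-\mathcal{B}_\mathrm{lin}(\uhit{l-1};\uhit{l},v_h)$ yields
\[\mathcal{B}_\mathrm{lin}(\uhit{l};\uhit{l+1}-\uhit{l},v_h) = -k^2\varepsilon\bigl((|\uhit{l}|^2-|\uhit{l-1}|^2)\uhit{l},v_h\bigr)_D.\]
Applying the generalized version of Lemma \ref{lem:erroraux} to this linear problem and then expanding $|\uhit{l}|^2-|\uhit{l-1}|^2$, bounding the two factors in $L^\infty(D)$ (by Step~1) and one factor in $L^2$ (via $\|\cdot\|_0\leq k^{-1}\|\cdot\|_{1,k}$) gives
\[\|\uhit{l+1}-\uhit{l}\|_{1,k}\lesssim k^2\varepsilon\cdot |\ln h|^{2\overline p}k^{d-3}\Cdata^2\cdot k^{-1}\|\uhit{l}-\uhit{l-1}\|_{1,k}\lesssim \sigma_1\|\uhit{l}-\uhit{l-1}\|_{1,k}.\]
For scheme \eqref{eq:simplenewtonitcont} the analogous manipulation, following exactly the second step of the proof of Proposition \ref{prop:convityua}, produces the same contraction with rate $\sigma_2$.

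Finally, $\{\uhit{l}\}$ is a Cauchy sequence in the finite-dimensional space $V_{h,p}$ and converges to some $u_{h,p}\in V_{h,p}$; passing to the limit in the defining identity shows that $u_{h,p}$ solves \eqref{eq:nhl-fem}, and taking $l\to\infty$ in the induction bounds of Step~1 yields the stability estimates \eqref{eq:stabuhp}. Uniqueness follows by applying the same contraction argument to the difference of two putative solutions $u_{h,p}^{(1)},u_{h,p}^{(2)}$ (which both satisfy the $L^\infty$ bound and therefore sit in the admissible range of the lemmas), forcing the difference to vanish; the rate \eqref{eq:convit} is then a standard telescoping of the geometric contraction. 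The main obstacle is a bookkeeping one: one has to verify that the constant $\tilde C_j$ defining $\sigma_j$ can be chosen uniformly so that $\sigma_j<1$ simultaneously ensures the smallness-of-coefficient hypothesis of the auxiliary lemmas \emph{and} yields a genuine contraction; this is exactly where the $|\ln h|^{\overline p}$ factor from Lemma \ref{lem:linfaux} enters and explains the form of $\sigma_j$.
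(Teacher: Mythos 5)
Your proposal is correct and follows essentially the same route as the paper's proof: an induction establishing well-definedness and the energy/$L^\infty$ bounds of each iterate via Lemmas \ref{lem:erroraux} and \ref{lem:linfaux}, followed by the contraction estimate mirroring Proposition \ref{prop:convityua} and a Banach fixed-point conclusion (the stability of the linearized discrete problem with a general $L^2$ right-hand side that you flag explicitly is also used implicitly in the paper). The only discrepancy is an immaterial sign in the equation satisfied by $\uhit{l+1}-\uhit{l}$, which does not affect the norm bound.
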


We hence obtain the continuous as well as the discrete solution as limit of a sequence of solutions to linearized Helmholtz problems. 
The proposition gives us the convergence of the two iteration schemes also in the discrete setting as well as existence and uniqueness of the solution to \eqref{eq:nhl-fem}. Of course, this unique solution exists as soon as $\sigma_j<1$ for $j=1$ \emph{or} $j=2$. In other words, we can choose the less restrictive condition when we consider properties of the discrete solution to the nonlinear problem.
Using the error estimates in the linear case (cf. Lemma \ref{lem:erroraux}), we can conclude our main result on the finite element error.

\begin{theorem}\label{thm:errornonlin}
	If $k(kh)^{p+1}\leq C_1$ and $|\ln h|^{2\overline p}\varepsilon k^{d-2}\Cdata^2\leq \theta$ sufficiently small, the unique finite element solution $u_{h,p}$ to \eqref{eq:nhl-fem} satisfies the error estimate
	\[\|u-u_{h,p}\|_{1,k}\lesssim (h+(kh)^p+k(kh)^{2p})\Cdata.\]
\end{theorem}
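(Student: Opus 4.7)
The plan is to reduce the nonlinear error analysis to the linear one provided by Lemma \ref{lem:erroraux} through an auxiliary intermediate function. Observe first that the continuous $u$ satisfies $\mathcal{B}_{\mathrm{lin}}(u;u,v)=(f,v)+(g,v)_\Gamma$ for all $v\in H^1(\Omega)$ and, analogously, that $u_{h,p}$ satisfies $\mathcal{B}_{\mathrm{lin}}(u_{h,p};u_{h,p},v_h)=(f,v_h)+(g,v_h)_\Gamma$ for all $v_h\in V_{h,p}$. I therefore introduce $\tilde u_{h,p}\in V_{h,p}$ as the finite element solution of the linearized problem \eqref{eq:auxpbfem} with the \emph{continuous} $\Phi=u$ frozen in the coefficient, and split
\[
\|u-u_{h,p}\|_{1,k}\;\leq\;\|u-\tilde u_{h,p}\|_{1,k}+\|\tilde u_{h,p}-u_{h,p}\|_{1,k}.
\]

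The first summand is exactly the FEM error of Lemma \ref{lem:erroraux} for the linearized problem with $\Phi=u$: by the a priori bound \eqref{eq:apriorinonlin} we have $k\varepsilon\|u\|_{L^\infty(D)}^2\lesssim k^{d-2}\varepsilon\Cdata^2$, which is $\leq \min\{\theta_1,\theta_2\}$ under the smallness assumption, and $k(kh)^{p+1}\leq C_1$ forces $kh\lesssim 1$ and thereby $k(kh)^{2p}\leq k(kh)^{p+1}\leq C_0$ so that the resolution condition of Lemma \ref{lem:erroraux} is met. The lemma then yields $\|u-\tilde u_{h,p}\|_{1,k}\lesssim (h+(kh)^p+k(kh)^{2p})\Cdata$, which is already the target rate.

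For the second summand, subtracting the two Galerkin identities and adding and subtracting $\mathcal{B}_{\mathrm{lin}}(u;u_{h,p},v_h)$ gives, for every $v_h\in V_{h,p}$,
\[
\mathcal{B}_{\mathrm{lin}}(u;\tilde u_{h,p}-u_{h,p},v_h)=k^2\varepsilon\bigl((|u|^2-|u_{h,p}|^2)u_{h,p},v_h\bigr)_D.
\]
Thus $\tilde u_{h,p}-u_{h,p}$ is the finite element solution of the linearized problem with the same $\Phi=u$ and the effective $L^2$-datum $\tilde f:=k^2\varepsilon\chi_D(|u|^2-|u_{h,p}|^2)u_{h,p}$, $\tilde g\equiv 0$. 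The stability \eqref{eq:stabfemaux} of Lemma \ref{lem:erroraux} (in the form $\|w_h\|_{1,k}\lesssim \|\tilde f\|_0$ for generic data, which is what the proof in Section \ref{sec:proofs} actually delivers) then yields $\|\tilde u_{h,p}-u_{h,p}\|_{1,k}\lesssim \|\tilde f\|_0$. Factoring $|u|^2-|u_{h,p}|^2=(|u|+|u_{h,p}|)(|u|-|u_{h,p}|)$, pulling the $L^\infty$-factors out using \eqref{eq:apriorinonlin} and the discrete bound \eqref{eq:stabuhp} from Proposition \ref{prop:exfemnonlin}, and finally applying $\|u-u_{h,p}\|_0\leq k^{-1}\|u-u_{h,p}\|_{1,k}$, produces
\[
\|\tilde u_{h,p}-u_{h,p}\|_{1,k}\;\lesssim\; |\ln h|^{2\overline p}\varepsilon k^{d-2}\Cdata^2\,\|u-u_{h,p}\|_{1,k}.
\]

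Choosing $\theta$ small enough so that the prefactor does not exceed $1/2$, absorbing on the left, and inserting the first-step estimate completes the proof. The main subtleties are thus (i) verifying the smallness and resolution conditions for Lemma \ref{lem:erroraux} with $\Phi=u$, which hinges on the continuous $L^\infty$-bound for $u$ from \eqref{eq:apriorinonlin}, and (ii) reading the stability estimate of Lemma \ref{lem:erroraux} as a bound for generic right-hand side data so that it applies to the nonlinear discrepancy; the $|\ln h|^{2\overline p}$ factor entering the smallness hypothesis comes entirely from the discrete $L^\infty$-bound \eqref{eq:stabuhp} and cleanly explains why the nonlinear smallness assumption is slightly stronger than its continuous counterpart \eqref{eq:smallnessdatacont}.
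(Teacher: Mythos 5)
Your argument is correct, and it rests on exactly the same ingredients as the paper's proof -- Lemma \ref{lem:erroraux} applied to the linearized problem with a \emph{continuous} function frozen in the coefficient, read both as an error estimate and as a stability estimate for generic $L^2$ right-hand sides, combined with the continuous $L^\infty$-bound \eqref{eq:apriorinonlin} and the discrete one \eqref{eq:stabuhp} -- but the organization is genuinely different. The paper never compares $u$ and $u_{h,p}$ directly: it freezes the coefficient at the continuous \emph{iterate} $\uit{l-1}$, bounds $\uit{l}-\uhit{l}$ through an intermediate $\tilde u_h^{(l)}$, runs a recursion in $l$ that it sums as a geometric series, and only then lets $l\to\infty$. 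You instead freeze the coefficient at the limit $u$ itself, which turns the second summand into a discrete linearized problem whose datum is controlled by $\|u-u_{h,p}\|_{1,k}$, and you close with a single absorption step. Your route is shorter and avoids the iteration bookkeeping entirely; the price is that it presupposes the existence, uniqueness and $L^\infty$-stability of $u_{h,p}$ from Proposition \ref{prop:exfemnonlin} (which the theorem's hypotheses do license, and which the paper's proof also uses implicitly through the uniform iterate bounds), and that the absorption requires the hidden constant in the stability estimate to be tracked so that $\theta$ can be chosen accordingly -- a point you flag correctly. Two minor constant-chasing remarks: the verification $k(kh)^{2p}\le C_0$ from $k(kh)^{p+1}\le C_1$ needs $C_1$ chosen small enough (since $kh\lesssim 1$ rather than $kh\le 1$ a priori), and your identity $\mathcal{B}_{\mathrm{lin}}(u;\tilde u_{h,p}-u_{h,p},v_h)=k^2\varepsilon((|u|^2-|u_{h,p}|^2)u_{h,p},v_h)_D$ checks out against the definition of $\mathcal{B}_{\mathrm{lin}}$. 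Neither affects the validity of the proof.
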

Note that by combining the previous theorem and \eqref{eq:convit} we deduce an error estimate for $u-\uhit{l}$ and any of the two schemes \eqref{eq:froznonlinitcont} or \eqref{eq:simplenewtonitcont} by the triangle inequality.

Theorem \ref{thm:errornonlin} bounds the error between the exact and the discrete solution of the nonlinear Helmholtz equation, where the dependence on $k$, $h$, and $p$ is the same as in the linear case.
We provide a pre-asymptotic error bound with the so-called pollution term $k(kh)^{2p}$ under a resolution condition discussed further in Remark \ref{rem:resolcond}.
Note that the first term $h$ occurs in Theorem \ref{thm:errornonlin} since we do not assume more than $H^2(\Omega)$-regularity of $u$. Nevertheless, this term can be interpreted as a higher order term in the sense that it does not dictate the rate of convergence due to the resolution condition, cf.~\cite[Rem.~2.40]{Pem2020}.
As in \cite{DuWu2015} for the linear Helmholtz equation, our result assumes a fixed polynomial degree in the sense that the involved constants depend on $p$. We strongly believe that the famous $hp$-error analysis in the asymptotic regime \cite{MelS10,MelS11} can be transferred to the nonlinear Helmholtz equation for sufficiently small data as well.
An $hp$-version of the result in Lemma \ref{lem:erroraux} in the asymptotic regime is already available, see \cite{LaSpWu2021a}, but to the best of our knowledge, nothing is known about an $hp$-version of the $L^\infty$-estimate in Lemma \ref{lem:linfaux}. One can circumvent the application of Lemma \ref{lem:linfaux} as in \cite{MaiV22}, but the price to pay is a worse $k$-dependence in the smallness of the data assumption.

\section{Proofs of the results in Section \ref{sec:nonlineardiscrete}}\label{sec:proofs}
In this section, we prove our main results Lemma \ref{lem:linfaux}, Proposition \ref{prop:exfemnonlin}, and Theorem \ref{thm:errornonlin}.

\subsection{Proof of Lemma \ref{lem:linfaux}}

\begin{proof}
	We set $\tilde w_h=\overline{P_h \overline{w}}$ with $P_h$ defined in \eqref{eq:defellproj} and $\eta_h=\tilde w_h-w_h$. 
	Further, we introduce $\eta\in H^1(\Omega)$ as the solution of
	\[(\nabla  \eta, \nabla v)+ik(\eta, v)_\Gamma=k^2((1+\chi_D \varepsilon |\Phi|^2)(w-w_h), v)\qquad \text{for all }v\in H^1(\Omega).\]
	By the triangle inequality, it holds
	\begin{align*}
	\|w_h\|_{L^\infty(D)}&\leq \|\eta\|_{L^\infty(D)}+\|\eta-\eta_h\|_{L^\infty(D)}+\|w-\tilde w_h\|_{L^\infty(D)}+\|w\|_{L^\infty(D)}\\&=:T_1+T_2+T_3+T_4.
	\end{align*}
	
	\emph{First step: Estimate of $T_1$:} 
	By elliptic regularity theory and \eqref{eq:errL2aux} we deduce
	\begin{equation}\label{eq:H2eta}
	\|\eta\|_2\lesssim k^2\|w_h-w\|_0\lesssim (k^2h^2+k(kh)^{p+1}+k^2(kh)^{2p})\Cdata.
	\end{equation}
	We observe that $\eta_h$ satisfies
	\[(L_h \eta_h, v_h)=k^2((1+\chi_D \varepsilon |\Phi|^2)(w-w_h), v_h)\qquad \text{for all }v_h\in V_{h,p}\]
	with $L_h$ as defined in \eqref{eq:Lh}. 
	Hence, $\eta_h$ is the finite element approximation of $\eta$.  
	Standard finite element theory then yields
	\begin{equation}\label{eq:femerreta}
	\begin{split}
	\|\eta-\eta_h\|_0&\lesssim h^2\|\eta\|_2\lesssim h^2k^2\|(1+\chi_D \varepsilon |\Phi|^2)(w-w_h)\|_0\\
	&\lesssim (k^2h^4+h(hk)^{p+2}+(kh)^{2p+2})\Cdata,
	\end{split}
	\end{equation}
	where we used \eqref{eq:errL2aux}.
	Next, we re-write the equation for $\eta$ and observe that it solves
	\begin{align*}
	(\nabla \eta, \nabla v)-k^2(\eta, v)+ik(\eta,v)_\Gamma = k^2(\eta_h-\eta, v)+k^2(w-\tilde w_h, v)+k^2(\varepsilon|\Phi|^2( w -w_h), v)_D.
	\end{align*}
	Hence, we obtain by~\eqref{eq:stabauxpb} together with \eqref{eq:femerreta} and \eqref{eq:errL2aux} that 
	\begin{align*}
	\|\eta\|_{L^\infty(D)}&\lesssim k^{(d-3)/2}k^2(\|\eta_h-\eta\|_0+\|\tilde w_h-w\|_0+k^{-1}\theta_1\|w_h-w\|_0)\\
	&\lesssim k^{(d-3)/2}k^2\bigl( k^2h^4+h(kh)^{p+2}+(kh)^{2p+2}+h^2+(kh)^ph+k^{-1} h^2\\
	&\qquad\qquad\qquad+k^{-1}h(kh)^p +k^{-1}(kh)^{2p}\bigr)\Cdata\\
	&=k^{(d-3)/2}\bigl(kh^2+(kh)^2+(kh)^4+(kh)^{p+1}+k(kh)^{p+1}\\
	&\qquad\qquad\quad+k(kh)^{p+3}+k^2(kh)^{2p+2}+k(kh)^{2p}\bigr)\Cdata\\
	&\lesssim k^{(d-3)/2}\Cdata,
	\end{align*}
	where we used the resolution condition $k(kh)^{p+1}\leq C_1$ in the last step.
	
	\emph{Second step: Estimate of $T_2$:}
	Note that $H^1(\Omega)$ can be continuously embedded into $L^6(\Omega)$ for $d\leq 3$.
	Take a subdomain $D_1$ with $D\subset D_1$ and $\operatorname{dist}(\partial D, \partial D_1)\approx \operatorname{dist}(\partial D_1, \partial \Omega)\approx 1$. Interior $L^\infty$-error estimates \cite[Thm.~5.1]{SchW1977}, interpolation estimates, interior Schauder estimates for elliptic equations \cite[Thm. 9.11]{GiTr2001}, and the $k$-weighted Nierenberg inequality from \cite[Lemma 2.3]{MaiV22} imply
	\begin{align*}
	\|\eta-\eta_h\|_{L^\infty(D)}&\lesssim |\ln h|^{\overline p}\, \|\eta-I_h\eta\|_{L^\infty(D_1)}+\|\eta-\eta_h\|_0\\
	&\lesssim  |\ln h|^{\overline p}\, h^{2-d/6} \|\eta\|_{W^{2,6}(D_1)}+\|\eta-\eta_h\|_0\\
	&\lesssim  |\ln h|^{\overline p}\, h^{2-d/6} (\|\eta\|_{L^6(\Omega)}+k^2\|w_h-w\|_{L^6(\Omega)})+\|\eta-\eta_h\|_0\\*
	&\lesssim  |\ln h|^{\overline p}\, h^{2-d/6}(\|\eta\|_2+k^{1+d/3}\|w-w_h\|_{1,k})+\|\eta-\eta_h\|_0,
	\end{align*}
	where $I_h$ denotes the nodal interpolation operator.
	In the first step, we already showed -- using \eqref{eq:femerreta} -- that $k^2\|\eta-\eta_h\|_0$ is uniformly bounded under the resolution condition $k(kh)^{p+1}\leq C_1$. Thereby we easily deduce that under the same resolution condition, $\|\eta-\eta_h\|_0\lesssim k^{-2}\lesssim k^{(d-3)/2}$.
	Hence, we only need to bound $h^{2-d/6}(\|\eta\|_2+k^{1+d/3}\|w-w_h\|_{1,k})$ in the following.
	With \eqref{eq:H2eta}, we obtain
	\begin{align*}
	h^{2-d/6}(\|\eta\|_2+k^{1+d/3}\|w-w_h\|_{1,k})&\lesssim h^{2-d/6}(k^2\|w-w_h\|_0+k^{1+d/3}\|w-w_h\|_{1,k})\\
	&\lesssim h^{2-d/6}k^{1+d/3}\|w-w_h\|_{1,k}.
	\end{align*}
	Applying \eqref{eq:erraux}, we deduce
	\begin{align*}
	h^{2-d/6}k^{1+d/3}\|w-w_h\|_{1,k}&\lesssim h^{2-d/6}k^{1+d/3}(h+(kh)^p+k(kh)^{2p})\Cdata\\
	&\lesssim k^{(d-3)/2}\bigl( (kh)^{2-d/6}k^{1/2}(h+(kh)^p+k(kh)^{2p})\bigr)\Cdata\\
	&\lesssim k^{(d-3)/2}\Cdata, 
	\end{align*}
	where we used the resolution condition $k(kh)^{p+1}\leq C_1$ and $k\gtrsim 1$ in the last inequality.
	Combining all estimates in this step, we showed
	\[\|\eta-\eta_h\|_{L^\infty(D)}\lesssim |\ln h|^{\overline p}\, k^{(d-3)/2}\Cdata\]
	under the resolution condition $k(kh)^p\leq C_1$.
	
	\emph{Third step: Estimate of $T_3$:}	
	Take a subdomain $D_1$ with $D\subset D_1$ and $\operatorname{dist}(\partial D, \partial D_1)\approx \operatorname{dist}(\partial D_1, \partial \Omega)\approx 1$ as in the previous step.
	We obtain with \cite[Thm.~5.1]{SchW1977}, \eqref{eq:propellproj} and \eqref{eq:stabauxpb}
	\begin{align*}
	\|w-\tilde w_h\|_{L^\infty(D)}&\lesssim |\ln h|^{\overline p}\,\|w\|_{L^\infty(D_1)}+\|w-\tilde w_h\|_0\\
	&\lesssim |\ln h|^{\overline p}\,\|w\|_{L^\infty(D_1)}+h\|w\|_{1,k}\\
	&\lesssim \Bigl(|\ln h|^{\overline p}\, k^{(d-3)/2}+h\Bigr) \Cdata
	\lesssim |\ln h|^{\overline p}\,k^{(d-3)/2}\Cdata,
	\end{align*}
	where we used $kh\lesssim 1$ and $k\gtrsim 1$ in the last step.
	
	Combining steps 1-3 with \eqref{eq:linfstabaux} for $T_4$ yields the assertion.
\end{proof}

\subsection{Proofs from Section \ref{subsec:femnonlinear}}

\begin{proof}[Proof of Proposition \ref{prop:exfemnonlin}]
	\emph{First step: Iteration scheme \eqref{eq:froznonlinitcont}:} As before, let for simplicity $\uhit{0}\equiv0$.
	From Lemma \ref{lem:erroraux} we obtain that a unique solution $\uhit{1}\in V_{h,p}$ to the discrete version of \eqref{eq:froznonlinitcont} exists and moreover, that it satisfies due to \eqref{eq:stabfemaux} and Lemma \ref{lem:linfaux}
	\[\|\uhit{1}\|_{1,k}\lesssim\Cdata\quad \text{and}\quad \|\uhit{1}\|_{L^\infty(D)}\lesssim |\ln h |^{\overline p}\,k^{(d-3)/2}\Cdata.\]
	The latter implies that $k\varepsilon\|\uhit{1}\|_{L^\infty(D)}^2\lesssim \min\{\theta_1,\theta_2\}$ so that the assumptions of Lemma \ref{lem:erroraux} are satisfied.
	Inductively, we conclude that the whole iteration sequence \eqref{eq:froznonlinitcont} exists. Each $\uhit{j}$ is the unique solution of a linearized Helmholtz problem and satisfies the stability estimates
	\[\|\uhit{l}\|_{1,k}\lesssim \Cdata\quad \text{and}\quad \|\uhit{l}\|_{L^\infty(D)}\lesssim |\ln h |^{\overline p}\,k^{(d-3)/2}\Cdata\]
	with constants in $\lesssim$ independent of $l$.
	
	Set $v_h^{(l)}=\uhit{l+1}-\uhit{l}$ and observe that $v_h^{(l)}$ solves
	\[\mathcal B_{\mathrm{lin}}(\uhit{l}; v_h^{(l)}, \psi_h)=k^2(\varepsilon \uhit{l}(|\uhit{l}|^2-|\uhit{l-1}|^2), \psi_h)_D\qquad \text{for all}\quad \psi_h\in V_{h,p}.\]
	As we have $k\varepsilon\|\uhit{l}\|_{L^\infty}\lesssim \min\{\theta_1,\theta_2\}$, we can again apply Lemma \ref{lem:erroraux} to obtain
	\begin{align*}
	\|v_h^{(l)}\|_{1,k}&\lesssim k^2\varepsilon\|\uhit{l}(|\uhit{l}|^2-|\uhit{l-1}|^2)\|_{0,D}\\
	&\lesssim k\varepsilon\|\uhit{l}\|_{L^\infty(D)}\Bigl(\|\uhit{l-1}\|_{L^\infty(D)}+\|\uhit{l}\|_{L^\infty(D)}\Bigr)\|v_h^{(l)}\|_{1,k}\\
	&\lesssim |\ln h|^{2\overline p}\,\varepsilon k^{d-2}\Cdata^2\|v_h^{(l)}\|_{1,k}.
	\end{align*}
	Hence if $\sigma_1:=\tilde C_1|\ln h|^{2\overline p}\varepsilon k^{d-2}\Cdata^2<1$, $\{v_h^{(l)}\}_{l\in \mathbb N_0}$ forms a strictly contracting sequence or, in other words, the iterations $\{\uhit{l}\}_{l\in \mathbb N_0}$ form a Cauchy sequence. Therefore, they converge to some $u_{h,p}$ and it is easy to verify that $u_{h,p}$ is a solution to \eqref{eq:nhl-fem}.
	Further, $u_{h,p}$ satisfies the stability estimates \eqref{eq:stabuhp} as they are satisfied in all iterations (see above).
	Following the arguments in the estimate for $v_h^{(l)}$ above, we can also conclude the uniqueness of $u_{h,p}$.
	Finally, \eqref{eq:convit} for $j=1$ follows as in the Banach fixed-point theorem.
	
	\emph{Second step: Iteration scheme \eqref{eq:simplenewtonitcont}:} We transfer the proof of Proposition \ref{prop:convityua} to the discrete setting.
	We show by induction that for all $l\geq 1$, $\uhit{l}$ is well-defined and satisfies
	\[\|\uhit{l}\|_{1,k}\lesssim  \Cdata, \quad \|\uhit{l}\|_{L^\infty(D)}\lesssim |\ln h|^{\overline p}\, k^{(d-3)/2}\Cdata,\]
	where the constants in $\lesssim$ may depend on $\sigma_2$, but not on $l$.
	
	The case $l=1$ directly follows from $\uhit{0}=0$ and \eqref{eq:stabfemaux} and Lemma \ref{lem:linfaux}.
	Let the statement be satisfied for $l$. Since $k\varepsilon\|\uhit{l}\|_{L^\infty(D)}^2\lesssim |\ln h|^{2\overline p}k^{d-2}\varepsilon\Cdata^2\lesssim \min\{\theta_1, \theta_2\}$ by assumption, we can deduce from Lemma \ref{lem:erroraux} that $\uhit{l+1}$ is indeed well-defined, see also the discussion in Section \ref{subsec:linearized}. Moreover, we have
	\begin{align*}
	\|\uhit{l+1}\|_{1,k}&\lesssim (\Cdata+\sigma_2\|\uit{l}\|_{1,k})
	\end{align*}
	With the assumption $\sigma_2<1$ we obtain recursively
	\begin{align*}
	\|\uhit{l+1}\|_{1,k}\lesssim \Cdata\sum_{j=0}^{l}\sigma_2^j\lesssim \Cdata.
	\end{align*}
	Employing Lemma \ref{lem:linfaux}, we furthermore obtain
	\begin{align*}
	\|\uhit{l+1}\|_{L^\infty(D)}&\lesssim  |\ln h|^{\overline p}\,k^{(d-3)/2}(\Cdata+\sigma_2\|\uhit{l}\|_{1,k})\lesssim |\ln h|^{\overline p}\,k^{(d-3)/2}\Cdata.
	\end{align*} 
	As in the first step, we define $v_h^{(l)}:=\uhit{l+1}-\uhit{l}$, which solves
	\[\mathcal{A}(\uhit{l}; v_h^{(l)}, \psi_h)=k^2\varepsilon(-|\uhit{l-1}|^2v_h^{(l-1)}+(|\uhit{l}|^2-|\uhit{l-1}|^2)\uhit{l}, \psi_h)_D.\]
	The a priori estimates for $\uhit{l}$ from above yield with \eqref{eq:stabfemaux}
	\begin{align*}
	\|v_h^{(l)}\|_{1,k}&\lesssim k^2\varepsilon\Bigl( \|\uhit{l-1}\|^2_{L^\infty(D)}\|v_h^{(l-1)}\|_0\\
	&\qquad\qquad +\|\uhit{l}\|_{L^\infty(D)}(\|\uhit{l}\|_{L^\infty(D)}+\|\uhit{l-1}\|_{L^\infty(D)})\|v_h^{(l-1)}\|_0\Bigr)\\
	&\lesssim k\varepsilon |\ln h|^{2\overline p}\,k^{d-3}\Cdata^2\|v_h^{(l-1)}\|_{1,k}.
	\end{align*}
	Hence if $\sigma_2:=C_2|\ln h|^{2\overline p}\varepsilon k^{d-2}\Cdata^2<1$, $\{v_h^{(l)}\}_{l\in \mathbb N_0}$ forms a strictly contracting sequence or, in other words, the iterations $\{\uhit{l}\}_{l\in \mathbb N_0}$ form a Cauchy sequence. Therefore, they converge to some $u_{h,p}$, which one can easily identify as the unique solution to \eqref{eq:nhl-fem} from the first step. The stability estimates \eqref{eq:stabuhp} are then already known, and, finally, \eqref{eq:convit} for $j=2$  again follows as in the Banach fixed-point theorem.
\end{proof}

\begin{proof}[Proof of Theorem \ref{thm:errornonlin}]
	We proceed as in \cite{WuZ18}. Let $\{\uit{l}\}_{l\in \mathbb N_0}$ and $\{\uhit{l}\}_{l\in \mathbb N_0}$ be iteration sequences for $u$ and $u_{h,p}$, respectively.
	We bound the error $\uit{l}-\uhit{l}$ and at the very end, let $l\to\infty$. Since we eventually consider the limit, we simply work with the iteration sequences defined according to \eqref{eq:froznonlinitcont}. The adaption to \eqref{eq:simplenewtonitcont} is straightforward and yields a similar result.
	We define a sequence $\{\tilde u_h^{(l)}\}_{l\in \mathbb N_0}$ via $\tilde u_h^{(0)}=\uhit{0}$ and $\tilde u_h^{(l)}\in V_{h,p}$ solves
	\[\mathcal{B}_{\mathrm{lin}}(\uit{l-1}; \tilde u_h^{(l)}, v_h)=(f, v_h)+(g, v_h)_\Gamma\qquad \text{for all}\quad v_h\in V_{h,p}.\]
	We split the error as $\uit{l}-\uhit{l}=(\uit{l}-\tilde u_h^{(l)})+(\tilde u_h^{(l)}-\uhit{l})$ and estimate both terms separately.
	For the first term, we obtain directly from Lemma \ref{lem:erroraux} that
	\begin{equation}\label{eq:errornlh-proof}
	\|\uit{l}-\tilde u_h^{(l)}\|_{1,k}\lesssim (h+(kh)^p+k(kh)^{2p}) \Cdata.
	\end{equation}
	It remains to estimate $\eta_h^{(l)}\coloneqq \tilde u_h^{(l)}-\uhit{l}$. We observe that $\eta_h^{(l)}\in V_{h,p}$ solves
	\[\mathcal B_{\mathrm{lin}}(\uit{l-1}; \eta_h^{(l)}, v_h)=k^2(\varepsilon(|\uit{l-1}|^2-|\uhit{l-1}|^2)\uhit{l}, v_h)_D\qquad \text{for all}\quad v_h\in V_{h,p}.\]
	From Lemmas \ref{lem:erroraux} and \ref{lem:linfaux}, we hence obtain
	\begin{align*}
	\|\eta_h^{(l)}\|_{1,k}&\lesssim k^2\varepsilon\|(|\uit{l-1}|^2-|\uhit{j-1}|^2)\uhit{l}\|_{0,D}\\
	&\lesssim k^2\varepsilon\|\uhit{l}\|_{L^\infty(D)}\bigl(\|\uit{l-1}\|_{L^\infty(D)}+\|\uhit{l-1}\|_{L^\infty(D)}\bigr) \|\uit{l-1}-\uhit{l-1}\|_{0,D}\\
	&\lesssim k^2\varepsilon\bigl(|\ln h|^{\overline p}\, k^{(d-3)/2}\Cdata\bigr)^2\|\uit{l-1}-\uhit{l-1}\|_{0,D}\\
	&\lesssim  |\ln h|^{2\overline p}\,k^{d-2}\Cdata^2\bigl(\|\uit{l-1}-\tilde u_h^{(l-1)}\|_{1,k}+\|\eta_h^{(l-1)}\|_{1,k}\bigr).
	\end{align*}
	If $|\ln h|^{2\overline p}k^{d-2}\varepsilon\Cdata^2$ is sufficiently small, we consequently have
	\[\|\eta_h^{(l)}\|_{1,k}\leq \frac 12\|\uit{l-1}-\tilde u_h^{(l-1)}\|_{1,k}+\frac 12\|\eta_h^{(l-1)}\|_{1,k}.\]
	By induction together with \eqref{eq:errornlh-proof} and $\eta_h^{(0)}=0$, we deduce
	\begin{align*}
	\|\eta_h^{(l)}\|_{1,k}&\lesssim \sum_{j=0}^{l-1}2^{l-j}\|\uit{j}-\tilde u_h^{(j)}\|_{1,k}\\
	&\lesssim (h+(kh)^p+k(kh)^{2p})\Cdata+2^{-l}\|\uit{0}-\uhit{0}\|_{1,k}.
	\end{align*}
	Finally, employing the triangle inequality, \eqref{eq:errornlh-proof} and $\uit{0}=\uhit{0}\equiv 0$ yields
	\begin{align*}
	\|\uit{l}-\uhit{l}\|_{1,k}\lesssim (h+(kh)^p+k(kh)^{2p})\Cdata.
	\end{align*}
	Letting $j\to \infty$ finishes the proof.
\end{proof}

\section{Numerical experiments}\label{sec:numexp}
In this section, we investigate the higher-order FEM for the nonlinear Helmholtz problem numerically. We first focus on the discretization error in dependence on the mesh size $h$, the polynomial degree $p$ and the wave number $k$, where we aim to illustrate the theoretical findings of Theorem \ref{thm:errornonlin}.
In the second part of the numerical experiments, we examine the convergence of the nonlinear iteration, where we aim to analyze the dependence of the contraction factor on $k,h,p, \varepsilon$ and the data, cf.~\eqref{eq:convit}. We also compare the two different iteration schemes presented in Section \ref{subsec:iter}.
For all experiments, we set $\Omega=B_1(0)\subset \mathbb{R}^2$ and $D=B_{0.5}(0)$. 
All simulations\footnote{The code to reproduce the results is available at Zenodo under DOI \texttt{10.5281/zenodo.7016963}.} were obtained with NGSolve \cite{Sch1997,Sch2014}.

\subsection{Convergence of the discretization error}
As data, we choose $g\equiv 0$ and 
\[f=\begin{cases}
10000\exp\Bigl(-\frac{1}{1.2-\bigl(\frac{|x-x_0|}{0.05}\bigr)^2}\Bigr)&\text{if }\frac{|x-x_0|}{0.05}<1,\\
0&\text{else},
\end{cases}\] 
with $x_0=(-0.55, 0)$.
Since an analytical solution is not known, we use the finite element solution on a mesh with $h=2^{-7}$ and polynomial degree $p=3$ as reference solution for the following error plots. 
We always depict relative errors in the $\|\cdot\|_{1,k}$ norm.
Unless otherwise mentioned, we solve the nonlinear system using the frozen nonlinearity iteration \eqref{eq:froznonlinitcont} until either the (relative) residual is smaller than $5 \cdot 10^{-7}$ or the maximum of $20$ iterations is reached.

\begin{figure}
	\includegraphics[trim=10mm 5mm 15mm 15mm, clip=true, width=0.31\textwidth]{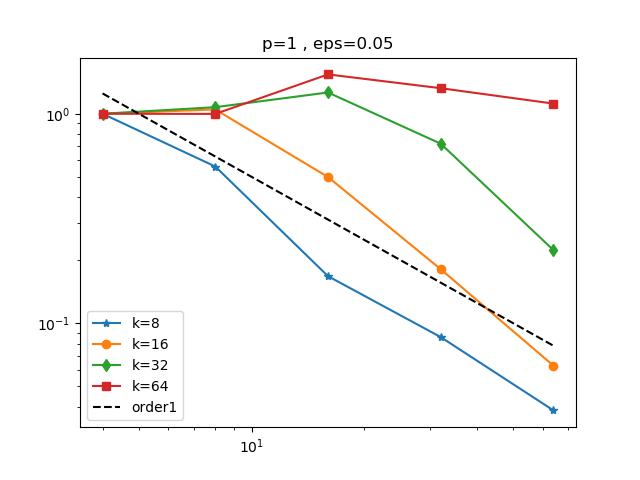}%
	\hspace*{2ex}%
	\includegraphics[trim=10mm 5mm 15mm 15mm, clip=true, width=0.31\textwidth]{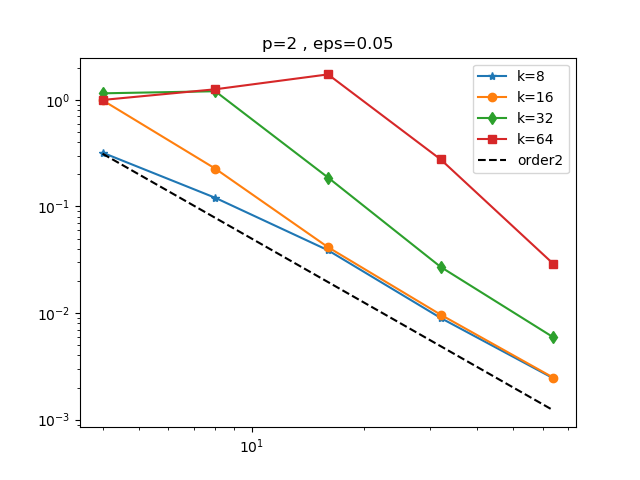}%
	\hspace*{2ex}%
	\includegraphics[trim=10mm 5mm 15mm 15mm, clip=true, width=0.31\textwidth]{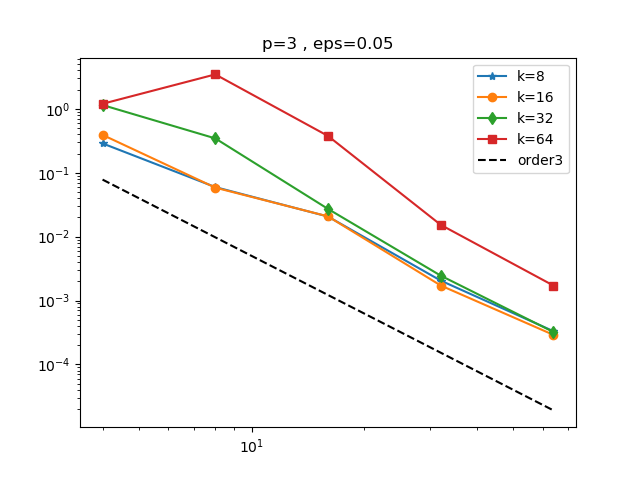}\\
	\includegraphics[trim=10mm 5mm 15mm 15mm, clip=true, width=0.31\textwidth]{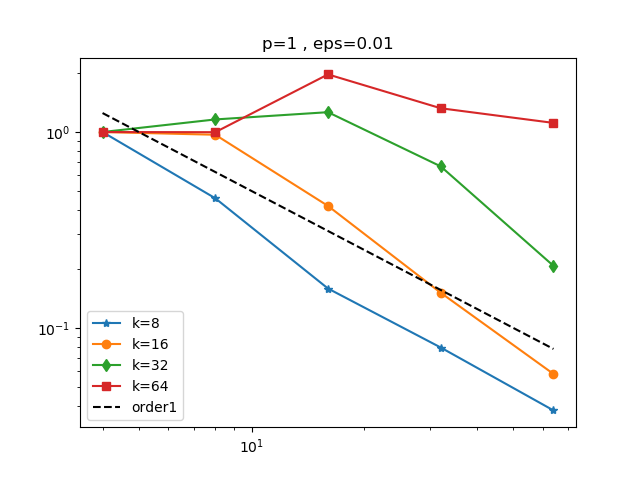}%
	\hspace*{2ex}%
	\includegraphics[trim=10mm 5mm 15mm 15mm, clip=true, width=0.31\textwidth]{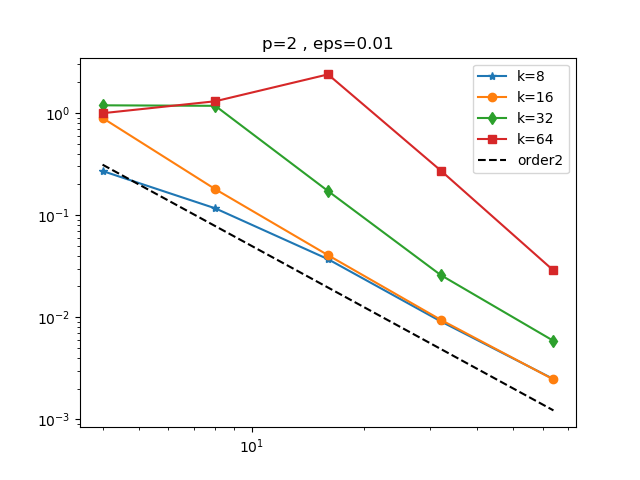}%
	\hspace*{2ex}%
	\includegraphics[trim=10mm 5mm 15mm 15mm, clip=true, width=0.31\textwidth]{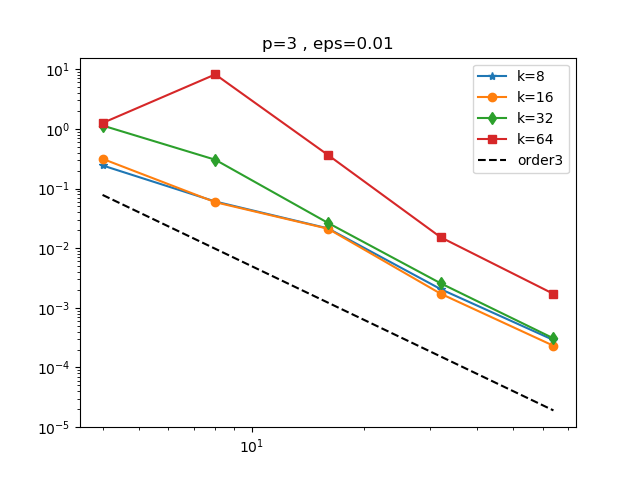}
	\caption{Relative error in the energy norm versus mesh size for the first experiment. Top row $\varepsilon=0.05$, bottom row $\varepsilon=0.01$. In each row from left to right $p=1,2,3$.}
	\label{fig:exp1hconvergence}
\end{figure}

The results for two different values of $\varepsilon$ are depicted in Figure \ref{fig:exp1hconvergence}. Firstly, we observe that the (asymptotic) error behavior is not influenced by $\varepsilon$ as expected by Theorem \ref{thm:errornonlin}.
Moreover, we confirm the expected convergence rates $h^p$ for this smooth right-hand side. Similar to and as expected from the linear case, we further observe that the plateau of error stagnation is larger for growing wave numbers, but this pollution effect can be reduced by increasing the polynomial degree.
We emphasize that we could always achieve a relative residual of at least $5 \cdot 10^{-7}$ in less than $20$ iterations in the convergence regime. On the other hand, for very coarse meshes (when the condition $k(kh)^{2p}\lesssim 1$ is not met), the fixed-point iteration may not converge. We stress that this does not contradict our theory.

To compare the error convergence for different $p$ from another perspective, we investigate how many degrees of freedom are required to obtain a relative energy error below a certain tolerance, say, $0.06$.
As Figure~\ref{fig:exp1hconvergence} suggests that the behavior for the two different $\varepsilon$ values is quite similar, we fix $\varepsilon=0.01$ in the following.
Table~\ref{tab:dofs} summarizes our findings, where $--$ indicates that we could not achieve the desired energy error with the considered meshes.
We make two important observations. First, for fixed wave number, the required number of degrees of freedom for the targeted accuracy decreases when we increase the polynomial degree. This is explained by the better convergence rate and the shorter stagnation  phase.
Second, for the wave number $k=8,16,32$, we can obtain the desired accuracy with (at most) $7,746$ degrees of freedom. For this, we need to slightly increase the polynomial degree, which is especially visible if the results for $k=16$ and $k=32$ are compared. 
Note that we expect to reduce the required number of degrees of freedom for $k=64$ if we considered even higher order spaces with $p\geq 4$. 
These observations agree very well with the $hp$-FEM convergence analysis of the linear case where the polynomial degree should be adapted like $p\gtrsim \log k$, see \cite{MelS10,MelS11}.

\begin{table}
	\centering
	\begin{tabular}{l|c|c|c|c}
		&$k=8$&$k=16$&$k=32$&$k=64$\\[0.5ex]\hline
		$p=1$&$14,560$&$14,560$&$--$&$--$\\
		&\footnotesize $(0.0379)$&\footnotesize $(0.0582)$&&\\[0.5ex]\hline
		$p=2$&$3,481$&$3,481$&$14,435$&$57,801$\\
		&\footnotesize $(0.0370)$&\footnotesize $(0.0405)$&\footnotesize $(0.0259)$&\footnotesize $(0.0287)$\\[0.5ex]\hline
		$p=3$&$7,746$&$1,788$&$7,746$&$32,358$\\
		&\footnotesize $(0.0214)$&\footnotesize $(0.0590)$&\footnotesize $(0.0265)$&\footnotesize $(0.0152)$\\[0.5ex]\hline
	\end{tabular}
	\caption{Required numbers of degrees of freedom to reach a relative energy error below $0.06$. In brackets the attained relative energy error is given.}
	\label{tab:dofs}
\end{table}

\subsection{Convergence of the nonlinear iteration}
We now turn to the behavior of the iteration schemes and we aim to shed light on their dependence on $k, h, p,\varepsilon$ and the data.
In our investigations, we will study (i) the number of iterations required to reach a (relative) residual of $5\cdot 10^{-7}$ or (ii) the contraction factor
\[\sigma^{(l)}\coloneqq \frac{\|\uhit{l}-\uhit{l-1}\|_{1,k}}{\|\uhit{l-1}-\uhit{l-2}\|_{1,k}}\qquad l\geq 2\]
in the $l$-th iteration step.

\begin{table}
	\centering
	\begin{tabular}{l|c|c||c|c||c|c}
		$(h,p)$&$2^{-4},2$&$2^{-4},5$&$2^{-4}, 1$&$2^{-7}, 1$&$2^{-3}, 2$& $2^{-6}, 2$\\\hline
		it.& $15$&$15$&$15$&$18$&$15$&$16$\\
		res.&$4.966\cdot 10^{-7}$&$4.980\cdot 10^{-7}$&$2.472\cdot 10^{-7}$&$2.960\cdot 10^{-7}$&$3.443\cdot 10^{-7}$&$2.671\cdot 10^{-7}$
	\end{tabular}
	\caption{Number of ``frozen nonlinearity'' iterations and final residuals for different values of $h$ and $p$.}
	\label{tab:frozenit-hp}
\end{table}

First, we fix $k=8$, $\varepsilon =0.1$, $g\equiv 0$ and $f\equiv 50$ and investigate the $h$ and $p$-dependence. We give the number of iterations till convergence (as explained above) for the iteration scheme \eqref{eq:froznonlinitcont} in Table~\ref{tab:frozenit-hp}. In the second and third column we see that the iteration scheme does not seem to be affected by $p$. This is in good agreement with \eqref{eq:convit} in Proposition \ref{prop:exfemnonlin}, where the contraction factor does not contain $p$. In the other columns we compare the dependence on $h$ for $p=1$ and $p=2$. Each time we refine the coarse mesh three times. The number of iterations only increases by a few steps. Since \cite{JiLiWuZo2022} shows that the $|\ln h|$ can be removed in two space dimensions, this is in good alignment with the theory.

\begin{table}
	\centering
	\begin{tabular}{l|c|c|c}
		$k$&$8$&$16$&$32$\\\hline
		iterations& $16$&$7$&$4$\\
		residual&$2.487\cdot 10^{-7}$&$3.128\cdot 10^{-7}$&$4.591\cdot 10^{-7}$
	\end{tabular}
	\caption{Number of ``frozen nonlinearity'' iterations and final residuals for different values of $k$.}
	\label{tab:frozenit-k}
\end{table}

As next step, we investigate the dependence of \eqref{eq:froznonlinitcont} on the wave number $k$. We fix $\varepsilon$, $f$, and $g$ as above, set $h=2^{-6}$ and $p=3$.
Table~\ref{tab:frozenit-k} clearly shows a decrease of the required iterations (until the residual is below $5\cdot 10^{-7}$) with growing wave number. This indicates that the $k$-independent contraction factor for $d=2$ in Proposition \ref{prop:exfemnonlin} seems to be sub-optimal. Our results would suggest that the contraction factor may even decrease like $k^{-1}$. However, we also note that in all the experiments on the $h$, $p$ and $k$-dependence of the nonlinear iteration so far, the contraction factors $\sigma^{(l)}$ varied rather considerably over the iteration number $l$ in each experiment. For this reason, we gave these results in terms of the required iterations. 
The results of Tables~\ref{tab:frozenit-hp} and \ref{tab:frozenit-k} are qualitatively the same also for the iteration \eqref{eq:simplenewtonitcont} and therefore omitted here.

\begin{figure}
	\centering
	\includegraphics[width=0.9\textwidth, trim=25mm 5mm 25mm 10mm, clip=true]{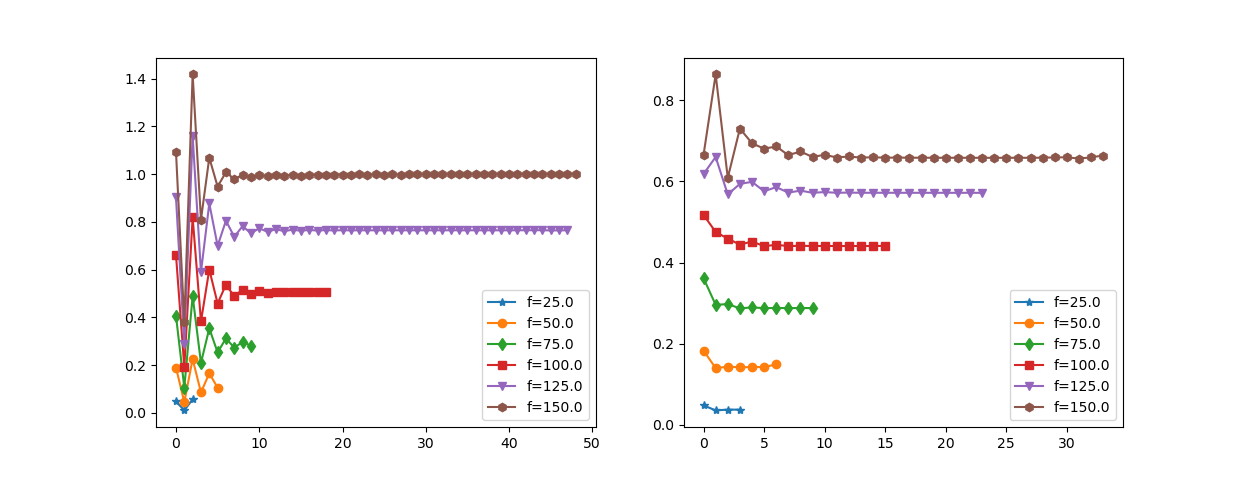}
	\caption{Contraction factors $\sigma^{(l)}$ over iteration number $l$ for different values of $f$ and the iteration schemes \eqref{eq:froznonlinitcont} (left) and \eqref{eq:simplenewtonitcont} (right).}
	\label{fig:iterationsf}
\end{figure}

We are mainly interested in how the convergence of the iteration depends on the $L^2$-norm of $f$ and on the size of $\varepsilon$. We fix $k=16$, $h=2^{-5}$ and $p=2$ and let the schemes \eqref{eq:froznonlinitcont} and \eqref{eq:simplenewtonitcont} iterate until either the (relative) residual is below $5\cdot 10^{-7}$ or the maximum number of $50$ iterations is reached.
Recall that we choose $f$ as a constant function on the whole domain for this experiment.
Figure~\ref{fig:iterationsf} shows the contraction factors $\sigma^{(l)}$ for the iteration schemes \eqref{eq:froznonlinitcont} and \eqref{eq:simplenewtonitcont} for different values of $f$. We first observe that there is an initial phase with varying $\sigma^{(l)}$ before an almost constant contraction factor is reached. This constant limit regime numerically verifies that both iteration schemes are of linear order like  a fixed-point scheme.
Additionally, we see that the initial phase seems to be longer for the frozen nonlinearity scheme \eqref{eq:froznonlinitcont}.
Comparing the behavior across different values of $f$, we clearly see that the contraction factor grows with larger $f$ in accordance with Proposition \ref{prop:exfemnonlin}.

\begin{figure}
	\centering
	\includegraphics[width=0.9\textwidth, trim=20mm 0mm 25mm 10mm, clip=true]{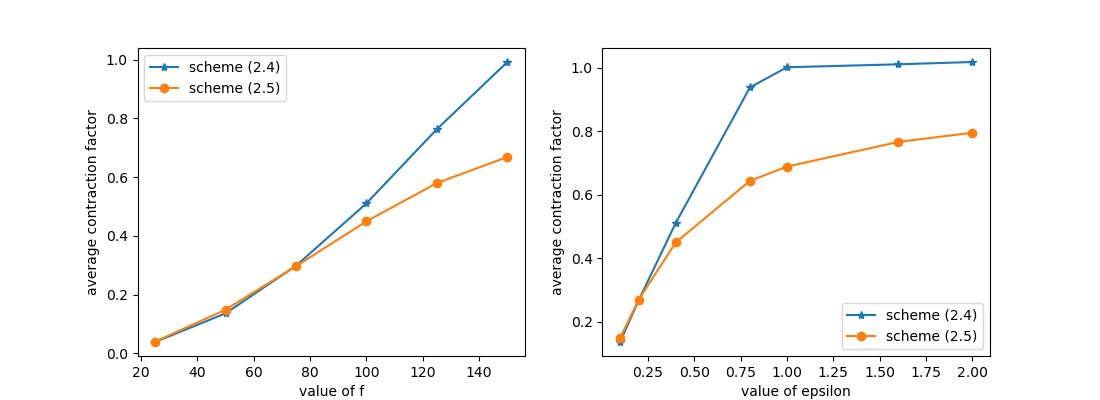}
	\caption{Average contraction factors over values of $f$ (left) and of $\varepsilon$ (right) for the two iteration schemes.}
	\label{fig:contraction}
\end{figure}

To make this better visible, we plot the average value of $\sigma$ across all iterations versus the value of $f$ in Figure \ref{fig:contraction} left. For small values of $f$, both iteration schemes perform similarly, but for larger $f$, \eqref{eq:simplenewtonitcont} has better contraction and convergence properties and, in that sense, is more robust. In particular, for $f\equiv 150$, the frozen nonlinearity \eqref{eq:froznonlinitcont} seems to be not converging (after $50$ iterations, the relative residual is still $0.15$), while the scheme \eqref{eq:simplenewtonitcont} converges.
In a similar spirit, we depict the average contraction factors over different values of $\varepsilon$ in Figure \ref{fig:contraction} right. Again, we see that the frozen nonlinearity is less robust than \eqref{eq:simplenewtonitcont}. The dependence on $\varepsilon$ seems in general to be more severe that the one on $f$. While the frozen nonlinearity converges only for $\varepsilon ={0.1, 0.2.0.4}$ in our example, the scheme \eqref{eq:simplenewtonitcont} converges for all considered values up to $\varepsilon=2$. For $\varepsilon=1.6$ and $\varepsilon=2$, $50$ iterations did not suffice to reach the residual of $5\cdot 10^{-7}$, but the contraction factors clearly suggest that \eqref{eq:simplenewtonitcont} should be able to reach that tolerance if we allowed more iterations. In fact, we obtain a final residual of $5.4\cdot 10^{-7}$ for $\varepsilon=1.6$ and of $2.3\cdot 10^{-6}$ for $\varepsilon=2$.

Summarizing, we could numerically confirm the better ``robustness'' of scheme \eqref{eq:simplenewtonitcont} over the frozen nonlinearity with respect to $\varepsilon$ and the (right-hand side) data, cf.~\cite{YuaL17}. Unfortunately, we could not reflect this in our theory, where the assumption on the data are more restrictive for the iteration \eqref{eq:simplenewtonitcont}. Further, our experiments indicate that the $k$-dependence of the contraction factor may be relaxed from the theoretical prediction in Proposition \ref{prop:exfemnonlin} and \cite{WuZ18}.

\section*{Conclusion}
In this contribution, we studied the finite element method with arbitrary but fixed polynomial degree for the nonlinear Helmholtz equation. By employing an error analysis for a linearized Helmholtz problem with small perturbation of the wave speed, we showed well-posedness and a priori error estimates under a smallness of the data assumption and the resolution condition $k(kh)^{p}\lesssim 1$.
In the treatment of the nonlinearity, we considered two different iteration schemes which can both be interpreted as fixed-point iterations.
Our numerical experiments confirm the theoretical estimates and, moreover, indicate that the results on the $hp$-FEM can be transferred from the linear case as well. Additionally, we compared the two iteration schemes concerning the performance and robustness with respect to the smallness of data assumption.
The contribution leaves some interesting future research questions, namely on a true $hp$-FEM analysis -- our constants may depend on the polynomial degree presently -- and on the different robustness of the two iteration schemes.

\section*{Acknowledgments}
Funding by Klaus-Tschira-Stiftung as well as by Deutsche Forschungsgemeinschaft (DFG, German Research Foundation) under Project-ID 258734477 (SFB 1173) and VE1397/2-1 is gratefully acknowledged.
The author would like to thank Céline Torres (University of Maryland) for fruitful discussions on the subject, especially concerning the solution splitting in Proposition \ref{prop:splitting}.

\appendix
\section{Proof of Proposition \ref{prop:splitting}}

\begin{proof}[Proof of Proposition \ref{prop:splitting}]
	Let $L_\Omega$, $L_\Gamma$, $H_\Omega$, $H_\Gamma$ be the high- and low-frequency filters on $\Omega$ and $\Gamma$, respectively, as introduced in \cite[Sec.~4.1.1]{MelS11}.
	For convenience, we briefly repeat the construction. Let $\mathcal{F}$ be the Fourier transform on $\mathbb{R}^d$ and set 
	\[L_{\mathbb{R}^d}(f)\coloneqq\mathcal{F}^{-1}(\chi_{\eta k}\mathcal{F}(f)), \quad H_{\mathbb{R}^d}(f)=(1-L_{\mathbb{R}^d})(f)\]
	for any $f\in L^2(\mathbb{R}^d)$, where $\chi_{\eta k}$ denotes the indicator function of $B_{\eta k}(0)$ and $\eta$ is a free parameter. For $\Omega$, let $E:L^2(\Omega)\to L^2(\mathbb{R}^d)$ be the Stein extension operator and define $H_\Omega (f)\coloneqq (H_{\mathbb{R}}^dE(f))|_\Omega$ and $L_\Omega$ analogously. For $\Gamma$, let $G:H^s(\Gamma)\to H^{3/2+s}(\Omega)$ denote the lifting operator with $\partial_\nu G(g)= g$ and define $H_\Gamma(g)\coloneqq\partial_\nu H_\Omega(G(g))$ and, analogously $L_\Gamma$.
	
	Denote by $N_k$, $S_k^\Delta$, and $S_k$ the solution operators for the linear, constant-coefficient Helmholtz equation (i.e., for $\varepsilon\equiv 0$) as introduced in \cite{MelS11}. More precisely, for $f\in L^2(\Omega)$, $N_k(f)$ is the unique solution to the Helmholtz equation $(-\Delta-k^2) u=f$ in $\mathbb{R}^d$ with Sommerfeld radiation condition.
	For $g\in L^2(\Gamma)$, $S_k^\Delta(g)$ is the solution to the ``good'' Helmholtz equation $(-\Delta+k^2)u=0$ in $\Omega$ with inhomogeneous Robin boundary conditions $\partial_\nu u+iku =g$ on $\Gamma$.
	Finally, $S_k(f,g)$ is the solution of the standard Helmholtz equation $(-\Delta-k^2) u=f$ in $\Omega$ with Robin boundary condition $\partial_\nu u+iku =g$ on $\Gamma$.
	Note that the sign for $ik$ in the boundary conditions is flipped in comparison to \cite{MelS11}, but the estimates remain valid.
	
	By the linearity of \eqref{eq:auxpb}, it suffices to prove Proposition \ref{prop:splitting} with only volume data $f$ or boundary data $g$ separately.
	We show how the assertion follows from the well-known results in case $g=0$, the other case can be proven similar.
	We set $w_\mathcal{A}^I:=S_k(L_\Omega f, 0)$, $w_{H^2}^I:=N_k(H_\Omega f)$.
	Denoting $\tilde{g}:=-\partial_n w_{H^2}^I-ikw_{H^2}^I$, we further set $w_\mathcal{A}^{II}:=S_k(0, L_\Gamma \tilde{g})$ and $w_{H^2}^{II}:=S_k^\Delta(H_\Gamma \tilde{g})$.
	By the definition of the solution operators, we deduce that the remainder $\tilde{r}:=w-w_\mathcal{A}^I-w_{H^2}^I-w_\mathcal{A}^{II}-w_{H^2}^{II}$ solves
	\begin{align*}
	-\Delta \tilde{r}-k^2(1+\chi_D \varepsilon|\Phi|^2)\tilde{r}=\tilde{f} \quad\text{in }\Omega, \qquad \partial_\nu \tilde{r}+ik \tilde{r}=0\quad \text{on }\Gamma,\\
	\text{with}\quad \tilde{f}:=k^2(2w_{H^2}^{II}+\chi_D\varepsilon|\Phi|^2(w_{H^2}^I+w_\mathcal{A}^I+w_{H^2}^{II}+w_\mathcal{A}^{II})).
	\end{align*}
	
	By the estimates for $w_{H^2}^{I, II}$ and $w_\mathcal{A}^{I, II}$ from \cite{MelS11}, we obtain
	\begin{align*}
	\|\tilde{f}\|_0&\leq 2k \|w_{H^2}^{II}\|_{1,k}+k\varepsilon\|\Phi\|^2_{L^\infty(D)}\bigl(\|w_\mathcal{A}^I\|_{1,k}+\|w_\mathcal{A}^{II}\|_{1,k}+\|w_{H^2}^I\|_{1,k}+\|w_{H^2}^{II}\|_{1,k}\bigr)\\
	&\leq 2q \|f\|_0+k\varepsilon\|\Phi\|^2_{L^\infty(D)} (2C\|f\|_0+2qk^{-1}\|f\|_0),
	\end{align*}
	where $q$ can be chosen arbitrarily small by adjusting $\eta$ above and $C$ is a $k$- and $\Phi$-independent constant.
	Here, we implicitly used that $\Omega$ is star-shaped and, hence, the stability constant of the Helmholtz equation with $\varepsilon=0$ is of the order one, cf. \cite{Mel95,CuFe2006}.
	Clearly, we see the existence of a constant $\theta_2$ such that, if $k\varepsilon\|\Phi\|_{L^\infty(D)}^2\leq \theta_2$, we have $\|\tilde{f}\|_0<\tilde{q}\|f\|_0$ for some $\tilde{q}<1$.
	Iterating this argument, we can write $w$ as sum of series (one series of analytic functions, one series of $H^2$-functions) that can be bounded with the help of the geometric series.
\end{proof}

\section{Proof of Lemma \ref{lem:erroraux}}
\begin{proof}[Proof of Lemma \ref{lem:erroraux}]
	We proceed similar to \cite{DuWu2015}. Let $P_h$ be defined via \eqref{eq:defellproj} and write $w-w_h=w-\overline{P_h\overline{w}}+w_h-\overline{P_h\overline{w}} = \rho+\eta_h$. We have by \eqref{eq:propellproj} and the solution splitting for $w$
	\[\|\rho\|_{1,k}\lesssim \inf_{v_h\in V_{h,p}}\|w-v_h\|_{1,k}\lesssim (h+(kh)^p) \Cdata,\]
	where we used the approximation properties of $V_{h,p}$ and Proposition \ref{prop:splitting} in the last step.
	Hence, we only have to consider $\eta_h=w_h-\overline{P_h\overline{w}}$ in the following.
	Observe that $\eta_h$ satisfies
	\[(\nabla \eta_h, \nabla v_h)-(k^2(1+\chi_D \varepsilon|\Phi|^2)\eta_h, v_h)+ik(\eta_h, v_h)_\Gamma = (k^2(1+\chi_D\varepsilon|\Phi|^2)(P_h w-w), v_h)\]
	for all $v_h \in V_{h,p}$.
	
	\emph{First step:} Insert $v_h=\eta_h$ and consider the imaginary part. We obtain with the standard $L^2$-projection $\Pi_h$ that
	\begin{align*}
	k\|\eta_h\|_\Gamma^2&= \Im\{(k^2(1+\chi_D\varepsilon|\Phi|^2)\rho, \eta_h)\}\leq k^2\|\Pi_h\rho\|_{1-p,h}\|\eta_h\|_{p-1,h}+k\theta_2 \|\rho\|_0\,\|\eta_h\|_0
	\end{align*}
	As in \cite[p.~792]{DuWu2015}, we have $\|\Pi_h\rho\|_{1-p, h}\lesssim h^p\|\rho\|_{1,k}$. Further, \eqref{eq:propellproj} gives $\|\rho\|_0\lesssim h\|\rho\|_{1,k}$. Hence,
	\[\|\eta_h\|_{0,\Gamma}^2\leq k^2h^{2p-1}\|\eta_h\|_{p-1,h}^2+\theta_2^2h\|\eta_h\|_0^2+h\|\rho\|_{1,k}^2.\]
	
	\emph{Second step:} Recall the definition of $A_h$ in \eqref{eq:Ah}.
	Consequently, we have
	\[(A_h \eta_h, v_h)=(k^2+1)(\eta_h, v_h)+k^2(\varepsilon|\Phi|^2 \eta_h, v_h)_D+ik(\eta_h, v_h)_\Gamma+k^2(\Pi_h\rho, v_h)+k^2(\varepsilon|\Phi|^2\rho, v_h)_D\]
	for any $v_h\in V_{h,p}$.
	For given $1\leq m\leq p$, set $v_h=A_h^{m-1}\eta_h$ to obtain
	\begin{align*}
	\|\eta_h\|_{m,h}^2&=(k^2+1)\|\eta_h\|_{m-1,h}^2+k^2(\varepsilon|\Phi|^2 \eta_h, A_h^{m-1}\eta_h)_D+ik(\eta_h, A_h^{m-1}\eta_h)_\Gamma\\
	&\quad +k^2(A_h^{(m-1)/2}\Pi_h\rho, A_h^{(m-1)/2}\eta_h)+k^2(\varepsilon|\Phi|^2\rho, A_h^{m-1}\eta_h)_D.
	\end{align*}
	From trace inequalities (cf.~\cite{DuWu2015}) and \eqref{eq:discretenorm-inverse} we obtain
	\begin{align*}
	|(\eta_h, A_h^{m-1}\eta_h)_\Gamma|&\lesssim \|\eta_h\|_{0,\Gamma}\,  h^{-m+1/2}\|\eta_h\|_{m-1,h}\\
	&\lesssim (kh^{p-m}\|\eta_h\|_{p-1,h}+\theta_2 h^{1-m}\|\eta_h\|_0+h^{1-m}\|\rho\|_{1,k})\|\eta_h\|_{m-1,h}\\
	&\lesssim (k\|\eta_h\|_{m-1,h}+\theta_2 h^{1-m}\|\eta_h\|_0+h^{1-m}\|\rho\|_{1,k})\|\eta_h\|_{m-1,h}.
	\end{align*}
	Further, we have with \eqref{eq:discretenorm-inverse} that
	\begin{align*}
	|k^2(\varepsilon|\Phi|^2 \eta_h, A_h^{m-1}\eta_h)_D|&\lesssim k\theta_2 \|\eta_h\|_0\|\eta_h\|_{2m-2,h}\lesssim k\theta_2 \|\eta_h\|_0\, h^{1-m}\|\eta_h\|_{m-1,h}
	\end{align*}
	and 
	\begin{align*}
	|k^2(\varepsilon|\Phi|^2\rho, A_h^{m-1}\eta_h)_D|&\lesssim k\theta_2 \|\rho\|_0\,\|\eta_h\|_{2m-2,h}\lesssim kh\theta_2 \|\rho\|_{1,k}\, h^{1-m}\|\eta_h\|_{m-1,h}.
	\end{align*}
	Altogether, we have for $1\leq m\leq p$ that
	\begin{align*}
	\|\eta_h\|_{m,h}^2&\lesssim k^2\|\eta_h\|_{m-1,h}^2+k^2\|\Pi_h\rho\|_{m-1,h}\|\eta_h\|_{m-1,h}\\*
	&\quad +(k\|\eta_h\|_{m-1,h}+\theta_2 h^{1-m}\|\eta_h\|_0+h^{1-m}\|\rho\|_{1,k}+\theta_2 h^{2-m}\|\rho\|_{1,k})k\|\eta_h\|_{m-1,h},
	\end{align*}
	and by Young's inequality we obtain 
	\begin{align*}
	\|\eta_h\|_{m,h}\lesssim k\|\eta_h\|_{m-1,h}+k\|\Pi_h\rho\|_{m-1,h}+\theta_2 h^{1-m}\|\eta_h\|_0+(\theta_2h^{2-m}+h^{1-m})\|\rho\|_{1,k}.
	\end{align*}
	As in \cite{DuWu2015}, it holds that $k\|\Pi_h\rho\|_{m-1,h}\lesssim h^{1-m}\|\rho\|_{1,k}$, so that we deduce
	\begin{equation}\label{eq:discretemnormetah}
	\|\eta_h\|_{m,h}\lesssim k\|\eta_h\|_{m-1,h}+\theta_2 h^{1-m}\|\eta_h\|_0+(\theta_2h^{2-m}+h^{1-m})\|\rho\|_{1,k}.
	\end{equation}
	Recursively, we obtain
	\begin{equation*}
	\begin{aligned}
	\|\eta_h\|_{m,h}&\lesssim k^m\|\eta_h\|_0+\sum_{j=0}^{m-1}\big\{ k^j\theta_2^{1+j}h^{1-m+j}\|\eta_h\|_0+k^j (\theta_2^{1+j}h^{2-m+j}+h^{1-m+j})\|\rho\|_{1,k}\big\},
	\end{aligned}
	\end{equation*}
	where we used $kh\lesssim 1$. Obviously, if $k\varepsilon\|\Phi\|_{L^\infty(D)}^2\leq \theta_2$ with $\theta_2$ sufficiently small, we deduce
	\begin{equation}\label{eq:discretenormetah}
	\begin{aligned}
	\|\eta_h\|_{m,h}&\lesssim (k^m+\theta_2 h^{1-m})\|\eta_h\|_0+h^{1-m}\|\rho\|_{1,k},
	\end{aligned}
	\end{equation}
	
	\emph{Third step:} 
	Let $z\in H^1(\Omega)$ be the dual solution of 
	\begin{align*}
	-\Delta z-k^2(1+\chi_D\varepsilon|\Phi|^2)z&=\eta_h,\\
	\partial_\nu z-ikz&=0.
	\end{align*}
	Multiplying with $w_h-w=\rho+\eta_h$, we obtain with the definition of $P_h$ and Galerkin orthogonality
	\begin{align*}
	(\rho +\eta_h, \eta_h)&=(\nabla (w_h-w), \nabla z)-(k^2(1+\chi_D\varepsilon|\Phi|^2)(w_h -w), z)+ik(w_h-w, z)_\Gamma\\
	&=(\nabla (w_h-w), \nabla(z- P_h z))-(k^2(1+\chi_D\varepsilon|\Phi|^2)(w_h-w), z-P_hz)\\
	&\qquad +ik(w_h-w, z-P_h z)_\Gamma\\
	&=(\nabla \rho, \nabla (z-P_hz))+ik(\rho, z-P_h z)_\Gamma -k^2((1+\chi_D\varepsilon|\Phi|^2)(\eta_h+\rho), z-P_h z).
	\end{align*}
	Hence, we deduce
	\begin{align*}
	\|\eta_h\|_0^2&=(\nabla \rho, \nabla (z-P_hz))+ik(\rho, z-P_h z)_\Gamma -k^2((1+\chi_D\varepsilon|\Phi|^2)(\eta_h+\rho), z-P_h z)-(\rho, \eta_h)\\
	&\leq \|\rho\|_{1,k}\|z-P_hz\|_{1,k}+k\theta_2 \|\rho\|_0\,\|z-P_hz\|_0+\|\rho\|_0\,\|\eta_h\|_0+k\theta_2 \|\eta_h\|_0\,\|z-P_hz\|_0\\
	&\qquad+k^2|(\eta_h, z-P_hz)|.
	\end{align*}
	Using the splitting according to Proposition \ref{prop:splitting} for $z$ and the properties of $P_h$, we have
	\begin{align*}
	\|P_h z-z \|_{1,k} &\lesssim (h+(kh)^p) \|\eta_h\|_0\\
	\|P_h z-z\|_0&\lesssim h(h+(kh)^p) \|\eta_h\|_0.
	\end{align*}
	Inserting these estimates as well as $\|\rho\|_0\lesssim h\|\rho\|_{1,k}$ into the one for $\eta_h$, we get
	\begin{align*}
	\|\eta_h\|_0^2&\lesssim \bigl((1+\theta_2)(h+(kh)^p)\|\rho\|_{1,k}\bigr) \|\eta_h\|_0+\theta_2 (h+(kh)^p)\|\eta_h\|_0^2+k^2|(\eta_h, z-P_h z)|.
	\end{align*}
	As in \cite[eq.~(5.13)]{DuWu2015}, we have 
	\begin{align*}
	|(\eta_h, z-P_h z)|&=|(\eta_h, \Pi_hz-P_hz)|\\*
	&\lesssim \|\eta_h\|_{p-1, h}\|\Pi_h z-z+z-P_h z\|_{1-p, h}\\
	&\lesssim \|\eta_h\|_{p-1, h}h^p(h+(kh)^p)\|\eta_h\|_0.
	\end{align*}
	Finally, we arrive at
	\begin{align*}
	\|\eta_h\|_0\lesssim (1+\theta_2)(h+(hk)^p)\|\rho\|_{1,k}+\theta_2 (h+(kh)^p)\|\eta_h\|_0+k^2h^p(h+(kh)^p)\|\eta_h\|_{p-1,h}.
	\end{align*}
	Obviously, if $k\varepsilon\|\Phi\|^2_{L^\infty(D)}\leq \theta_2$ with $\theta_2$ sufficiently small,
	\begin{align}\label{eq:l2normetah}
	\|\eta_h\|_0\lesssim (h+(hk)^p)\|\rho\|_{1,k}+k^2h^p(h+(kh)^p)\|\eta_h\|_{p-1,h}.
	\end{align}
	
	\emph{Fourth step:} By plugging \eqref{eq:discretenormetah} with $m=p-1$ into \eqref{eq:l2normetah}, we deduce
	\begin{align*}
	\|\eta_h\|_0&\lesssim (h+(hk)^p)\|\rho\|_{1,k}+k^2h^p(h+(kh)^p)\bigl((k^{p-1}+\theta_3 h^{2-p})\|\eta_h\|_0+h^{2-p}\|\rho\|_{1,k}\Bigr)\\
	&\lesssim (h+(kh)^p)\|\rho\|_{1,k}+((kh)^{p+1}+k(kh)^{2p})\|\eta_h\|_0+\theta_3 (h+(kh)^p)\|\eta_h\|_0,
	\end{align*}
	where we used $kh\lesssim 1$
	Hence, if $k(kh)^{2p}\leq C_0$ sufficiently small and $\theta_3\lesssim 1$ sufficiently small,
	we have
	\[\|\eta_h\|_0\lesssim (h+(kh)^p)\|\rho\|_{1,k}\lesssim (h^2+h(kh)^p+(kh)^{2p})\Cdata\]
	by the stability of $w$ and $P_h$.
	Combining with \eqref{eq:discretemnormetah} for $m=1$, we also obtain the bound for $\|\eta_h\|_{1,k}$, which finishes the proof of \eqref{eq:erraux}.
	
	The triangle inequality and the stability of $w$ then also give us \eqref{eq:stabfemaux} as well as existence and uniqueness of the discrete solution.
\end{proof}


\begin{thebibliography}{10}
	
	\bibitem{BarFT09}
	G.~Baruch, G.~Fibich, and S.~Tsynkov.
	\newblock A high-order numerical method for the nonlinear {H}elmholtz equation
	in multidimensional layered media.
	\newblock {\em J. Comput. Phys.}, 228(10):3789--3815, 2009.
	
	\bibitem{BeChMe2021}
	M.~Bernkopf, T.~Chaumont-Frelet, and J.~M. Melenk.
	\newblock Wavenumber-explicit stability and convergence analysis of hp finite
	element discretizations of {H}elmholtz problems in piecewise smooth media.
	\newblock arXiv preprint 2209.03601, 2022.
	
	\bibitem{ChNi2020}
	T.~Chaumont-Frelet and S.~Nicaise.
	\newblock Wavenumber explicit convergence analysis for finite element
	discretizations of general wave propagation problems.
	\newblock {\em IMA J. Numer. Anal.}, 40(2):1503--1543, 2020.
	
	\bibitem{CuFe2006}
	Peter Cummings and Xiaobing Feng.
	\newblock Sharp regularity coefficient estimates for complex-valued acoustic
	and elastic {H}elmholtz equations.
	\newblock {\em Math. Models Methods Appl. Sci.}, 16(1):139--160, 2006.
	
	\bibitem{DuWu2015}
	Yu~Du and Haijun Wu.
	\newblock Preasymptotic error analysis of higher order {FEM} and {CIP}-{FEM}
	for {H}elmholtz equation with high wave number.
	\newblock {\em SIAM J. Numer. Anal.}, 53(2):782--804, 2015.
	
	\bibitem{EveW14}
	G.~Ev\'{e}quoz and T.~Weth.
	\newblock Real solutions to the nonlinear {H}elmholtz equation with local
	nonlinearity.
	\newblock {\em Arch. Ration. Mech. Anal.}, 211(2):359--388, 2014.
	
	\bibitem{GaSp23}
	Jeffrey Galkowski and Euan~A. Spence.
	\newblock Sharp preasymptotic error bound for the {H}elmholtz $h$-{FEM}.
	\newblock arXiv preprint 2301.03574, 2023.
	
	\bibitem{GiTr2001}
	David Gilbarg and Neil~S. Trudinger.
	\newblock {\em Elliptic partial differential equations of second order}.
	\newblock Classics in Mathematics. Springer-Verlag, Berlin, 2001.
	\newblock Reprint of the 1998 edition.
	
	\bibitem{GolG84}
	J.~A. Goldstone and E.~Garmire.
	\newblock Intrinsic optical bistability in nonlinear media.
	\newblock {\em Phys. Rev. Lett.}, 53:910--913, 1984.
	
	\bibitem{GraS20}
	I.~G. Graham and S.~A. Sauter.
	\newblock Stability and finite element error analysis for the {H}elmholtz
	equation with variable coefficients.
	\newblock {\em Math. Comp.}, 89(321):105--138, 2020.
	
	\bibitem{JiLiWuZo2022}
	Run Jiang, Yonglin Li, Haijun Wu, and Jun Zou.
	\newblock Finite element method for a nonlinear {PML} {H}elmholtz equation with
	high wave number, 2022.
	
	\bibitem{Ker75}
	J.~Kerr.
	\newblock A new relation between electricity and light: {D}ielectrified media
	birefringent.
	\newblock {\em Philosophical Magazine}, 50:337--348, 1875.
	
	\bibitem{LaSpWu2022}
	D.~Lafontaine, E.~A. Spence, and J.~Wunsch.
	\newblock A sharp relative-error bound for the {H}elmholtz {$h$}-{FEM} at high
	frequency.
	\newblock {\em Numer. Math.}, 150(1):137--178, 2022.
	
	\bibitem{LaSpWu2021a}
	D.~Lafontaine, E.~A. Spence, and J.~Wunsch.
	\newblock Wavenumber-explicit convergence of the {$hp$}-{FEM} for the
	full-space heterogeneous {H}elmholtz equation with smooth coefficients.
	\newblock {\em Comput. Math. Appl.}, 113:59--69, 2022.
	
	\bibitem{LaSpWu2021b}
	David Lafontaine, Euan~A. Spence, and Jared Wunsch.
	\newblock Decompositions of high-frequency {H}elmholtz solutions via functional
	calculus, and application to the finite element method.
	\newblock arXiv preprint 2102.13081, 2021.
	
	\bibitem{MaiV22}
	Roland Maier and Barbara Verf\"urth.
	\newblock Multiscale scattering in nonlinear {K}err-type media.
	\newblock Math.~Comp.~(online first), 2022.
	
	\bibitem{Mel95}
	J.~M. Melenk.
	\newblock {\em On generalized finite-element methods}.
	\newblock ProQuest LLC, Ann Arbor, MI, 1995.
	\newblock PhD Thesis, University of Maryland, College Park.
	
	\bibitem{MelS10}
	J.~M. Melenk and S.~Sauter.
	\newblock Convergence analysis for finite element discretizations of the
	{H}elmholtz equation with {D}irichlet-to-{N}eumann boundary conditions.
	\newblock {\em Math. Comp.}, 79(272):1871--1914, 2010.
	
	\bibitem{MelS11}
	J.~M. Melenk and S.~Sauter.
	\newblock Wavenumber explicit convergence analysis for {G}alerkin
	discretizations of the {H}elmholtz equation.
	\newblock {\em SIAM J. Numer. Anal.}, 49(3):1210--1243, 2011.
	
	\bibitem{Pem2020}
	Owen~Rhys Pembery.
	\newblock {\em The Helmholtz Equation in Heterogeneous and Random Media:
		Analysis and Numerics}.
	\newblock PhD thesis, University of Bath, 2020.
	
	\bibitem{SchW1977}
	A.~H. Schatz and L.~B. Wahlbin.
	\newblock Interior maximum norm estimates for finite element methods.
	\newblock {\em Math. Comp.}, 31(138):414--442, 1977.
	
	\bibitem{Sch1997}
	Joachim Sch{\"o}berl.
	\newblock Netgen an advancing front 2d/3d-mesh generator based on abstract
	rules.
	\newblock {\em Computing and visualization in science}, 1(1):41--52, 1997.
	
	\bibitem{Sch2014}
	Joachim Sch{\"o}berl.
	\newblock C++ 11 implementation of finite elements in ngsolve.
	\newblock {\em Institute for analysis and scientific computing, Vienna
		University of Technology}, 30/2014, 2014.
	
	\bibitem{WuZ18}
	H.~Wu and J.~Zou.
	\newblock Finite element method and its analysis for a nonlinear {H}elmholtz
	equation with high wave numbers.
	\newblock {\em SIAM J. Numer. Anal.}, 56(3):1338--1359, 2018.
	
	\bibitem{XuB10}
	Z.~Xu and G.~Bao.
	\newblock A numerical scheme for nonlinear {H}elmholtz equations with strong
	nonlinear optical effects.
	\newblock {\em J. Opt. Soc. Am. A}, 27(11):2347--2353, 2010.
	
	\bibitem{YuaL17}
	L.~Yuan and Y.~Y. Lu.
	\newblock Robust iterative method for nonlinear {H}elmholtz equation.
	\newblock {\em J. Comput. Phys.}, 343:1--9, 2017.
	
	\bibitem{ZhWu2013}
	Lingxue Zhu and Haijun Wu.
	\newblock Preasymptotic error analysis of {CIP}-{FEM} and {FEM} for {H}elmholtz
	equation with high wave number. {P}art {II}: {$hp$} version.
	\newblock {\em SIAM J. Numer. Anal.}, 51(3):1828--1852, 2013.
	
\end{thebibliography}
\end{document}